\newcommand{\argmin}{\operatornamewithlimits{arg\,min}}
\theoremstyle{thmstyleone}%
\newtheorem{Theorem}{Theorem}
\theoremstyle{thmstyletwo}%
\theoremstyle{thmstylethree}%
\newtheorem{Assumption}{Assumption}%
\newtheorem{Lemma}{Lemma}%
\begin{document}

\title[Article Title]{Anderson acceleration for iteratively reweighted $\ell_1$ algorithm}


\author[1]{\fnm{Kexin} \sur{Li}}\email{likx1@shanghaitech.edu.cn}



\affil[1]{\orgdiv{School of Information Science and Technology}, \orgname{ShanghaiTech University}, \orgaddress{ \city{Shanghai}, \country{China}}}



\abstract{Iteratively reweighted L1 (IRL1) algorithm is a common algorithm for solving sparse optimization problems with nonconvex and nonsmooth regularization. The development of its acceleration algorithm, often employing Nesterov acceleration, has sparked significant interest. Nevertheless, the convergence and complexity analysis of these acceleration algorithms consistently poses substantial challenges. Recently, Anderson acceleration has gained prominence owing to its exceptional performance for speeding up fixed-point iteration, with numerous recent studies applying it to gradient-based algorithms. Motivated by the powerful impact of Anderson acceleration, we propose an Anderson-accelerated IRL1 algorithm and establish its local linear convergence rate. We extend this convergence result, typically observed in smooth settings, to a nonsmooth scenario. Importantly, our theoretical results do not depend on the Kurdyka-\L{}ojasiewicz condition, a necessary condition in existing Nesterov acceleration-based algorithms. Furthermore, to ensure global convergence, we introduce a globally convergent Anderson accelerated IRL1 algorithm by incorporating a classical nonmonotone line search condition. Experimental results indicate that our algorithm outperforms existing Nesterov acceleration-based algorithms.}
\keywords{Anderson acceleration, Iteratively reweighted $\ell_1$ algorithm, Sparse optimization, Nonconvex regularization, Fixed-point iteration}



\maketitle
 
\newpage
\section{Introduction}\label{sec1}
In this paper, we aim to solve a class of nonconvex and nonsmooth optimization problems as follows:
\begin{equation}\label{equation1}\tag{P}
	\min\limits _{\bm{x}\in \mathbb{R}^n} \:F(\bm{x}):=f(\bm{x})+  \lambda \displaystyle\sum^n_{i=1} \phi (\vert x_i\vert),
\end{equation}
where $f: \mathbb{R} ^n \to \mathbb{R}$ is Lipschitz continuously differentiable function and $\phi:\mathbb{R}_+ \to \mathbb{R}_+$ is a nonconvex and nonsmooth regularization function. Additionally, a user-prescribed constant $\lambda \in \mathbb{R}_{++}$ often refers to the regularization parameter. Problem \eqref{equation1} arises from diverse fields ranging from signal processing \cite{philiastides2006temporal,tropp2006just}, biomedical informatics \cite{tsuruoka2007learning,liao2007logistic} to modern machine learning \cite{mairal2010online,scardapane2017group}.


The function $\phi$ assumes various forms such as EXP approximation \cite{bradley1998feature}, LPN approximation \cite{fazel2003log}, LOG approximation \cite{lobo2007portfolio}, FRA approximation \cite{fazel2003log}, TAN approximation \cite{candes2008enhancing}, SCAD \cite{fan2001variable}, and MCP approximation \cite{zhang2007penalized}. In Table \ref{tab1}, we delineate the explicit forms of different cases. Here, $p$ is a hyperparameter that governs the sparsity. These common $\ell_0$ approximation functions adhere to the assumptions outlined below:

    



        




    


\begin{table}[h]
    \renewcommand\arraystretch{1.5}
    \caption{Different weights based on different choice of $\phi$}\label{tab1}
    \begin{tabular}{@{}lll@{}}

    \toprule
    $\phi$ & $\phi(|\tilde{x}_i|)$  & $\tilde{\omega}_i$  \\
    \midrule
    
    EXP \cite{bradley1998feature}    & $1-e^{-p|\tilde{x}_i|}$   & $ pe^{-p(\|\tilde{x}_i\|_1)}$  \\

    LPN \cite{fazel2003log}   & $\tilde{x}^p_i$   & $p(\|\tilde{x}_i\|_1+\tilde{\epsilon}_i)^{p-1}$ \\
        
    LOG \cite{lobo2007portfolio}    & $\log(1+p|\tilde{x}_i|)$   & $\frac{p}{1+p\|\tilde{x}_i\|_1} $ \\

    FRA \cite{fazel2003log}   & $\frac{|\tilde{x}_i|}{|\tilde{x}_i|+p}$   & $\frac{p}{(\|\tilde{x}_i\|_1+p)^2} $  \\

    TAN \cite{candes2008enhancing}    & $\frac{|\tilde{x}_i|}{|\tilde{x}_i|+p}$   & $\frac{p}{1+p^2(\|\tilde{x}_i\|_1)^2} $ \\
    \botrule
    \end{tabular}

    \end{table}
 
    Despite the potential wide applicability of \eqref{equation1}, its nonconvex and nonsmooth nature generally makes it challenging to solve. Numerous researchers have focused on the design and theoretical analysis of algorithms for such nonconvex and nonsmooth sparse optimization problems \cite{wang2021nonconvex,gong2013general,lu2014iterative,wang2015optimality,yang2022towards}. The iteratively reweighted $\ell_1$ algorithm, due to its powerful performance, has obtained extensive attention \cite{lu2014iterative,wang2022extrapolated,yu2019iteratively,chartrand2008iteratively,lu2014proximal,chen2010convergence}. IRL1 is a special case of the Majorization-minimization algorithm. The core of IRL1 approximates the nonsmooth regularization by a weighted $\ell_1$ norm in each iteration. Therefore, IRL1 iteratively solves a local convex approximation model to address the objective problem \eqref{equation1}. 
    
    Since Nesterov introduced the extrapolation technique for the gradient descent algorithm \cite{nesterov1983method}, a variety of momentum-based acceleration techniques have been utilized in first-order algorithms. A notable example is the Fast Iterative Shrinkage-Thresholding Algorithm (FISTA) proposed by Beck and Teboulle \cite{beck2009fast}. Concurrently, numerous studies have been dedicated to the design of acceleration algorithms for the IRL1 algorithm. However, due to the non-convexity and non-smoothness of problem \eqref{equation1}, analyzing the convergence and complexity of the acceleration IRL1  algorithm often poses significant challenges. Existing literature primarily utilizes Nesterov's extrapolation technique to accelerate the algorithm. These Nesterov acceleration-based algorithms typically rely on the Kurdyka-\L{}ojasiewicz (KL) condition to establish convergence and complexity, with their complexity outcomes frequently tied to the KL coefficient. However, numerous practical problems cannot fully ensure the KL property. Furthermore, estimating the KL coefficient for a given function often presents substantial challenges.  
    
    The current main results of the accelerated iteratively reweighted $\ell_1$ algorithm are as follows: Yu and Pong \cite{yu2019iteratively} proposed three different versions of the IRL1 acceleration algorithms with the Nesterov technique and demonstrated the global convergence of the algorithms under the Kurdyka-\L{}ojasiewicz (KL) conditions. However, they did not conduct a complexity analysis. Additionally, they stipulated that $f$ must be a smooth convex function and that the limit $\lim \limits_{t \rightarrow 0^+} \phi(t)$ must exist. This requirement restricts the use of certain regularization, such as the commonly used LPN regularization. Furthermore, Wang and Zeng \cite{wang2022extrapolated} introduced an accelerated iteratively reweighted L1 algorithm with the Nesterov technique, specifically for LPN regularization. They provided a global convergence guarantee and local complexity analysis for their algorithm under the Kurdyka-\L{}ojasiewicz (KL) conditions. The algorithm has local linear convergence when the KL coefficient is not greater than $1/2$, and local sublinear convergence when the KL coefficient exceeds $1/2$.
    


From another perspective, classical sequence-based extrapolation techniques have garnered increasing attention. These include minimal polynomial extrapolation \cite{smith1987extrapolation}, reduced rank extrapolation \cite{eddy1979extrapolating}, and Anderson acceleration \cite{anderson1965iterative}. Initially, these techniques are commonly used method for accelerating fixed-point iteration. Anderson acceleration has obtained great attention due to its powerful performance, which leverages the information from historical iterates and sums the weighted historical iterates to obtain new iterate. Anderson acceleration was initially introduced to accelerate the solution process of nonlinear integral equations \cite{anderson1965iterative}. This technique was later generalized to address the general fixed-point iteration \cite{walker2011anderson,toth2015convergence}. Currently, Anderson acceleration has been widely used and analyzed. In recent times, numerous works have integrated Anderson acceleration into optimization algorithms. Fu, Zhang, and Boyd \cite{fu2020anderson} applied it to the Douglas-Rachford Splitting algorithm. Poon and Liang \cite{poon2019trajectory} utilized Anderson acceleration to augment the performance of the ADMM method.  The design and convergence analysis of Anderson accelerated algorithm has been the subject of comprehensive study. Although Anderson acceleration does not use derivatives in its iterations, proving its convergence without continuous differentiability presents a challenge. Convergence analyses have been conducted for the linear case \cite{toth2015convergence,walker2011anderson}, the continuously differentiable case \cite{chen2019convergence}, and the Lipschitz continuously differentiable case \cite{toth2017local,toth2015convergence}. Recent studies have presented local convergence and local linear complexity outcomes for Anderson acceleration in certain specific non-smooth fixed point iteration problems. For a fixed point issue that can be divided into the sum of a smooth component and a non-smooth component with a minor Lipschitz constant, Bian et al.\cite{bian2021anderson} have demonstrated new findings. Furthermore, Bian and Chen\cite{bian2022anderson} have offered convergence and complexity outcomes for a non-smooth composite fixed-point iteration problem. Additionally, Mai and Johansson  \cite{mai2020anderson} applied Anderson acceleration to the classic proximal gradient method.

\subsection{Main contributions}

Considering the success of Anderson acceleration for accelerating various problems, we proposed Anderson accelerated iteratively reweighted $\ell_1$ algorithm. The main contributions of this paper can be summarized as follows:

\begin{enumerate} [1)]

    \item We present an Anderson accelerated iteratively reweighted $\ell_1$ algorithm for solving common nonconvex and nonsmooth regularization optimization problem as defined in \eqref{equation1}. Under the framework of Anderson acceleration, we establish the local R-linear convergence rate of the algorithm in the nonsmooth setting. Notably, such results are typically observed only in a smooth setting. Moreover, to the best of our knowledge, this is the first work that guarantees linear complexity results for accelerated IRL1 algorithms without the need for KL conditions.
    
    \item Considering the existing uncertainty about the global convergence of the Anderson acceleration, we introduce a safety strategy rooted in a classical nonmonotone line search condition in \cite{zhang2004nonmonotone}. We present a globally convergent Anderson-accelerated IRL1 algorithm based on the strategy. Our algorithm ensures global convergence while preserving computational efficiency.
    
    \item In our experiments, we verified the theoretical results of our algorithms and demonstrated that they outperform the state-of-the-art accelerated IRL1 algorithm proposed in \cite{wang2022extrapolated}.
    \end{enumerate}

\subsection{Notation and preliminaries}\label{subsec11}

We denote $\mathbb{N}:=\{0,1,2...\} $, $\mathbb{R}^n$ as the n-Dimensional Euclidean space and $\mathbb{R}_+^n$ as the positive orthant in $\mathbb{R}^n$, with $\mathbb{R}_{++} ^n$ being the interior of $\mathbb{R}_+^n$. Let $\|\bm{x}\|_p = (\sum^{n}_{i=1}|x_i|^p)^{1/p}$ with $p\in (0,+\infty)$ and $\|\bm{x}\|$ as the $\ell_2$ norm. Define ${\rm sign}(\bm{x})=({\rm sign}(x_1),...,{\rm sign}(x_n)^T$.  Define ${\rm diag}(\bm{x})\in \mathbb{R}^{n\times n}$ be  diagonal matrix with
$\bm{x}$ forming the diagonal elements. Define $\mathcal{A} (\bm{x}):=\{\bm{x}\in\mathbb{R}^n|i:x_i=0\}$ and $\mathcal{I} (\bm{x}):=\{\bm{x}\in\mathbb{R}^n|i:x_i\neq 0\}$. Consider a metric space (M,d), if a fixed-point mapping $H:\mathbb{R}^n \rightarrow \mathbb{R}^n$ exists such that for any real number $\gamma \in (0,1)$ and for all $\bm{x},\bm{y}$ in $M$, the inequality $\|H(\bm{x})-H(\bm{y})\|\le \gamma \|\bm{x}-\bm{y}\|$ holds, then we define $H$ as a contraction mapping from $M$ to itself. For a set $S$, we define the relative interior ${\rm rint}(S)$ be:

$${\rm rint}(S):=\{\bm{x}\in S:{\rm there\: exists}\: \epsilon >0\: {\rm such\: that}\: B_{\epsilon}(\bm{x})\cap {\rm aff}(S)\subseteq S \},$$
where ${\rm aff}(S)$ is the affine hull of $S$ and $B_{\epsilon}(\bm{x})$ is a open ball of radius $\epsilon$ centered on $\bm{x}$. The subdifferential of a convex function $f:\mathbb{R}^n \rightarrow \mathbb{R}$ at $\bm{x}$ is the set defined by 
$$ \partial f(\bm{x}) = \left\{\bm{z}\in \mathbb{R} ^n  : f(\bm{y})-f(\bm{x})\ge\langle \bm{z},\bm{y}-\bm{x} \rangle , \forall \bm{y} \in \mathbb{R} ^n   \right\}.$$
 For an extended-real-valued function $f:\mathbb{R}^n \rightarrow (-\infty,+\infty]$, if the domain $\mathop{{\rm dom}}f:=\{\bm{x}:f(\bm{x})<+\infty\} \ne \emptyset  $, we say $f$ is proper. A proper function $f$ is said to be closed if it is lower semi-continuous. For a proper function $f$, its Fréchet subdifferential at $\bm{x}$, denoted as $\partial_F f(\bm{x})$, is the set

$$ \partial_F f(\bm{x}) = \left\{\bm{u} \in \mathbb{R} ^n  : \mathop{{\rm lim \, inf}} _{\bm{z}\rightarrow \bm{x} ,\bm{z}\ne \bm{x}} \frac{f(\bm{z})-f(\bm{x})-\langle \bm{u},\bm{z}-\bm{x} \rangle }{\|\bm{z}-\bm{x}\|}\ge 0 \right\}.$$
Let the distance of a point $\bm{x}\in\mathbb{R}^n$ to a set $S\subset \mathbb{R}^n$ be 
$${\rm dist}(\bm{x},S):={\rm inf}\{\|\bm{x}-\bm{s}\|:\bm{s}\in S\}.$$
The following assumptions are supposed
to be satisfied by the minimizing function  $F$ in \eqref{equation1} throughout the paper.
\begin{Assumption}\label{Assumption1} 
    \begin{itemize}
        \item [(i)] The function $f$ has Lipschitz continuous gradient with constant $L_f > 0$, that is, $f(\bm{x})-f(\bm{y}) \le \nabla f(\bm{y})^T(\bm{x}-\bm{y})+\frac{L_f}{2}\|\bm{x}-\bm{y}\|^2_2,\ \forall\ \bm{x},\bm{y}\in\mathbb{R}^n$.
            
        \item [(ii)] The function $\phi$ is smooth, concave and strictly increasing on $(0, +\infty)$. Moreover, $\phi(0) = 0$ and it is Fr\'echet subdifferentiable at $0$.
            
        \item [(iii)] The function $F$ is non-degenerate in the sense that for any critical point $\bar{\bm{x}}$ of $F$, it holds that $\bm{0} \in \mathop{{\rm rint}}\partial_F F(\bar{\bm{x}})$.
    \end{itemize}
\end{Assumption}


 Next, we recall some basic background of the iteratively reweighted $\ell_1$ algorithm and Anderson acceleration methods.

\textbf{The basis of iteratively reweighted $\ell_1$ (IRL1) algorithm}.The basis of iteratively reweighted $\ell_1$ (IRL1) algorithm The basic version IRL1 is a special case of majorization-minimization instance. To overcome the nonsmooth objective in \eqref{equation1}, a common technique used in the IRL1 algorithm is to add a perturbation vector $\bm{\epsilon}\in \mathbb{R}^n_{++}$ to the nonsmooth term $\phi$ to have a continuously differentiable relaxed objective $F(\bm{x},\bm{\epsilon})$, i.e.,
\begin{equation}
    \begin{aligned}\label{Eq_relaxedModel}
        F(\bm{x},\bm{\epsilon}):= f(\bm{x})+ \lambda \sum^n_{i=1}\phi(|x_i|+\epsilon_i),
    \end{aligned}
\end{equation}
With such a relaxed optimization model, a key to using IRL1 to solve \eqref{Eq_relaxedModel} is to derive a convex majorant. Specifically, given the $k$th iterate, we have
\begin{equation}\label{regularization approximation}
    \begin{aligned}
        F(\bm{x},\bm{\epsilon}) &= f(\bm{x})+ \lambda \sum^n_{i=1}\phi(|x_i|+\epsilon_i)\\
                                &\leq f(\bm{x}^{k}) + \nabla f(\bm{x}^{k})^{T}(\bm{x} - \bm{x}^{k}) + \frac{L}{2}\vert\bm{x}-\bm{x}^{k}\Vert_2^2\\
                                &\quad + \lambda \sum_{i=1}^{n}\phi'(\vert x_i^{k} \vert + \epsilon_i^{k})(\vert x_i \vert - \vert x_i^{k}\vert),\\
    \end{aligned}
\end{equation}
where the first inequality holds mainly because of the $L_f$-smoothness of $f$ and the concavity of $\phi$, and $L \ge L_f$. Consequently, at the $k$-th iteration, one solves a convex subproblem that locally approximates $F$ to obtain the new iterate $\bm{x}^{k+1}$, i.e., 
\begin{equation}
	\begin{aligned}\label{equation3}
		\bm{x}^{k+1} \leftarrow  \argmin_{x\in\mathbb{R}^n}\{G(\bm{x};\bm{x}^k,\bm{\epsilon}^k):= Q_k(\bm{x})+\lambda \sum^n_{i=1}\omega^k_i\vert x_i\vert\},
\end{aligned}
\end{equation}
where $\omega^k_i:=\omega(x^k_i,\epsilon^k_i) =\phi'(|x_i^k|+\epsilon^k_i)$ and  $Q_k(\bm{x}) = Q(\bm{x};\bm{x}^k):=  \nabla f(\bm{x}^k)^T \bm{x}+\frac{L}{2}\|\bm{x}-\bm{x}^k\|^2$.

The subproblem (\ref{equation3}) has a closed-form solution:

\begin{equation}\label{equation6}
	x^{k+1}_i =
	\begin{cases}
		x^k_i -\frac{1}{L}[\nabla_i f(\bm{x}^k)-\lambda \omega^k_i]   & x^k_i<\frac{1}{L}[\nabla_i f(\bm{x}^k)-\lambda \omega^k_i] ,\\
        x^k_i -\frac{1}{L}[\nabla_i f(\bm{x}^k)+\lambda \omega^k_i]   & x^k_i>\frac{1}{L}[\nabla_i f(\bm{x}^k)+\lambda \omega^k_i] ,\\
		0   & otherwise.
	\end{cases}
\end{equation}

In the IRL1 algorithm, the value of $\bm{\epsilon}$ has a significant impact on the performance. A large $\epsilon$ makes the subproblem have good properties but will cause the algorithm to miss many local minimum solutions, thereby affecting hitch the performance of the algorithm. Conversely, a small $\epsilon$ fails to smooth the problem, leading to challenges in solving the subproblem. Hence, a common strategy is to initialize the algorithm with a large $\bm{\epsilon}^0$  and gradually reduce it to zero over the iterations. 
 Wang et al. \cite{wang2021relating} and  Lu \cite{lu2014iterative} have each proposed dynamic update methods for $\bm{\epsilon}$. Additionally, Wang et al. \cite{wang2021relating} points the locally stable sign property of the $\{\bm{x}^k\}$ that is generated by IRL1 with $\ell_p$ norm, which means that $\textrm{sign}(\bm{x}^k)$ remain unchanged for sufficiently large $k$.

Denote $W^k:=\mathop{{\rm diag}}(\omega^k_1,...\omega^k_n)$. The ﬁrst-order necessary optimality condition of the subproblem (\ref{equation3}) is given as follows

\begin{equation}
    \begin{aligned} \label{equation11}
        \nabla Q_k(\bm{x}^{k+1}) + \lambda W^k \xi^k = 0,
\end{aligned}
\end{equation}
where $\xi^k \in \partial \|\bm{x}^{k+1}\|_1$. 

For $F(\bm{x},\bm{\epsilon})$ and the local model \(Q_k(\cdot)\), we make the following assumption:
    
    \begin{Assumption}\label{Assumption2} For $F(\bm{x}^0,\bm{\epsilon}^0)$ and $Q_k(\cdot)$, the following statements hold.
            \begin{itemize}		
                \item [(i)] The level set $\mathcal{L}(F(\bm{x}^0,\bm{\epsilon}^0)) :=\{x | F(x) \leq  F(\bm{x}^0,\bm{\epsilon}^0) \}  $ is bounded.
                \item [(ii)]
                For all $k \in \mathbb{N}$, $Q_k(\cdot)$ is strongly convex with constant $M>L_f/2>0$ and Lipschitz differentiable with constant $L>0.$
            \end{itemize}
        \end{Assumption}
     
These assumptions for local model $Q_k(\cdot)$ and $\mathcal{L}(F(\bm{x}^0,\bm{\epsilon}^0))$ are common in much of the literature on IRL1. They ensure the descent of the function value along the sequence of iterates $\{\bm{x}^k\}$ and also guarantee the boundness of $\{\bm{x}^k\}$.

\textbf{Anderson acceleration}. Anderson acceleration initially is used for solving such a fixed-point iteration:
\begin{equation}\label{fixed-point iteration}
     \begin{aligned}
         \mbox{Find } \bm{x} \in \mathbb{R}^n \mbox{ such that } \bm{x} = H(\bm{x}),
 \end{aligned}
 \end{equation}
where $H:\mathbb{R}^n\rightarrow \mathbb{R}^n$ is a mapping.  The framework of Anderson accelerated fixed-point iterations is given as follows:

 \begin{algorithm}[H]
     \renewcommand{\algorithmicrequire}{\textbf{Input:}}
     \renewcommand{\algorithmicensure}{\textbf{Initialization:}}
     \caption{Anderson accelerated fixed-point iteration}\label{AA fixed-point}
     \begin{algorithmic}[1]
     \Require Given $ \bm{x}^0\in\mathbb{R}^n$. Pick $m\ge 1$.
     \Ensure Set $k = 0 $ 
     \While{not convergence}
     \State Set $m_k =\min (m,k)$
     \State Set  $R^k = [\bm{r}^k,...,\bm{r}^{k-m_k}]$, where $\bm{r}^k = H(\bm{x}^k) - \bm{x}^k$
     
     \State Update $\bm{\alpha}^k \leftarrow \argmin_{\bm{\alpha}^T\textbf{e}=1}\|R^k\bm{\alpha}\|$
     
     \State Update $\bm{x}^{k+1} = \sum^{m_k}_{i=0}\alpha^k_i H^{k-m_k+i}$
     
     \State Set $k=k+1$
     \EndWhile
     \end{algorithmic}
     \end{algorithm}
We denote $H^k=H(\bm{x}^k)$ as the vector obtained by mapping $\bm{x}^k$ through $H$. Define $\bm{r}^k=H(\bm{x}^k)-\bm{x}^k$ as the residual term and $R^k  = [\bm{r}^k,...,\bm{r}^{k-m_k}]$, where $R^k \in \mathbb{R}^n \times \mathbb{R}^{m+1}$. Given an initial $x^0$ and an integer parameter $m\ge 1$, the fundamental concept of Anderson acceleration is to derive a weight vector $\bm{\alpha} \in \mathbb{R}^{m+1}$ that minimizes the weighted sum of the previous $m+1$ residual terms $r^k$:
\begin{equation}
    \begin{aligned}\label{AA subproblem}
        \bm{\alpha}^k \leftarrow \argmin_{\bm{\alpha}^T\textbf{e}=1}\|R^k\bm{\alpha}\|.
\end{aligned}
\end{equation}
Subsequently, the $\bm{x}^{k+1}$ is obtained by computing the weighted sum of $H^{k},...,H^{k-m_k}$, using the weight vector $\bm{\alpha}^k$.

\begin{equation}
    \begin{aligned}
        \bm{x}^{k+1} = \sum^{m_k}_{i=0}\alpha^k_i H^{k-m_k+i}.
\end{aligned}
\end{equation}
The subproblem (\ref{AA subproblem}) has a closed-form solution

\begin{equation}
    \begin{aligned}
        \bm{\alpha}^{k} = \frac{[(R^k)^TR^k]^{-1}\boldsymbol{1}}{\boldsymbol{1}^T [(R^k)^TR^k]^{-1}\boldsymbol{1}}.
\end{aligned}
\end{equation}
The cost of solving (\ref{AA subproblem}) is $O(m^2+mn)$. Given that $m$ is typically a very small (common setting is 5 or 15) constant in practice, the computation involved in solving this subproblem is considered trivial. Therefore, Anderson acceleration does not excessively increase the computation. It is worth noting that $(R^k)^TR^k$ may be singular. To guarantee the non-singularity of the least-square subproblem, we can add a Tikhonov regularization of $10^{-10}\|R^k\|^2$ to it (similar to \cite{mai2020anderson, scieur2017nonlinear}).

\section{Anderson accelerated IRL1}\label{sec2}

In this section, we describe the proposed algorithm with Anderson acceleration for solving \eqref{equation1}.

In our algorithm, we update $\bm{\epsilon}$ by $\bm{\epsilon}^{k+1}=\mu \bm{\epsilon}^{k}$ in iteratively reweighted $\ell_1$ algorithm, where $\mu\in (0,1)$ is a coefficient that controls the decay rate. Then the iteration of the IRL1 algorithm can be regarded as a fixed-point iteration $H:\mathbb{R} ^{2n}\rightarrow \mathbb{R}^{2n}$ of the compound variable $(\bm{x},\bm{\epsilon})$.

\begin{equation}
	\begin{aligned}\label{H}
        \left[\begin{array}{c}
            \bm{x}^{k+1}  \\
            \bm{\epsilon}^{k+1}
            \end{array}\right]
             = H(\bm{x}^k,\bm{\epsilon}^k) := 
        \left[\begin{array}{c}
            H_{\bm{x}}(\bm{x}^k,\bm{\epsilon}^k)  \\
            H_{\bm{\epsilon}}(\bm{x}^k,\bm{\epsilon}^k)
            \end{array}\right]
            = 
        \left[\begin{array}{c}
            \argmin_{\bm{x} \in\mathbb{R}^n} G_k(\bm{x})  \\
            \mu\bm{\bm{\epsilon}}^{k} 
            \end{array}\right].
\end{aligned}
\end{equation}

Where $H_{\bm{x}}(\bm{x}^k,\bm{\epsilon}^k) = \argmin_{\bm{x} \in\mathbb{R}^n} G_k(\bm{x})$ and $H_{\bm{\epsilon}}(\bm{x}^k,\bm{\epsilon}^k) = \mu \bm{\epsilon}$. To brevity, we denote $H^k=H(\bm{x}^k,\bm{\epsilon}^k)$, $H_{\bm{x}}^k=H_{\bm{x}}(\bm{x}^k,\bm{\epsilon}^k)$ and $H_{\bm{\epsilon}}^k=H_{\bm{\epsilon}}(\bm{x}^k,\bm{\epsilon}^k)$. Consequently, a natural idea is to use Anderson acceleration to accelerate the IRL1 algorithm. We propose the AAIRL1 algorithm. In each iteration, we apply Anderson acceleration to the mapping $H_{\bm{x}}$ to obtain a new iterate $\bm{x}^{k+1}$, and update $\bm{\epsilon}^{k+1}$ through the mapping $H_{\bm{\epsilon}}$. We present the framework of AAIRL1 algorithms in Algorithm \ref{algo1}, where we define $\bm{r}_1(\bm{x},\bm{\epsilon}) = H_{\bm{x}}(\bm{x},\bm{\epsilon})-\bm{x}$ and $\bm{r}_1^k =\bm{r}_1(\bm{x}^k,\bm{\epsilon}^k)$.

\begin{algorithm}[H]
    \renewcommand{\algorithmicrequire}{\textbf{Input:}}
    \renewcommand{\algorithmicensure}{\textbf{Initialize:}}
    \caption{The Proposed Anderson Accelerated IRL1 (AAIRL1)}\label{algo1}
    \begin{algorithmic}[1]
    \Require Given $\bm{x}^0 \in \mathbb{R}^n$ and $\bm{\epsilon}^0\in \mathbb{R}^n_{++}$. Pick $\mu \in (0,1)$ and $m \geq 1$.
    \Ensure Set $k = 0 $.
    \While{not converge}
    \State Set $m_k = \min(m,k)$
    \State Set $(H_{\bm{x}}^k,\bm{\epsilon}^{k+1}) \leftarrow H(\bm{x}^k,\bm{\epsilon}^k)$
    \State Set $R_1^k = [\bm{r}_1^k,...,\bm{r}_1^{k-m_k}]$, where $\bm{r}_1^{k} = H_{\bm{x}}^k - \bm{x}^k$ 
    \State Update $\bm{\alpha}^k \leftarrow \argmin_{\bm{\alpha}^T\bm{e}=1}\|R_1^k\bm{\alpha}\|_2$
    \State Update $\bm{x}^{k+1} = \sum^{m_k}_{i=0}\alpha^k_i H_{\bm{x}}^{k-m_k+i}$
    \State Set $k \gets k+1$
    \EndWhile
    \end{algorithmic}
\end{algorithm}

 Although Anderson acceleration does not employ derivatives in its iterations, establishing its convergence in the absence of continuous differentiability poses a significant challenge. The first convergence result was proposed in \cite[Theorem 2.3]{toth2015convergence}, which demonstrates that for a bounded $\sum^{m_k}_{i=0}|\alpha^k_i|$, given the continuous differentiability and contraction of $H$, Anderson acceleration can ensure the R-linear convergence rate of the fixed-point iteration when initiated near the fixed-point $\bm{x}^*$.
    
The theoretical analysis of \cite[Theorem 2.3]{toth2015convergence} requires the continuous differentiability of $H$. However, in the context of problem \eqref{equation1}, the mapping defined in (\ref{H}) struggles to meet this requirement. Notably, Wang et al. \cite{wang2021relating} suggested that IRL1 with $\ell_p$ norm is equivalent to gradient descent at the tail end of the algorithm in a smooth subspace. Inspired by this, we establish a local continuous differentiability of $H$.

\subsection{Local property of $H$}\label{subsec22}
Define $\bm{\bm{\theta}} = [\bm{x},\bm{\epsilon}]\in \mathbb{R}^n\times \mathbb{R}^n$. We begin by demonstrating that \(F(\bm{\bm{\theta}})\)  decreases monotonically over iterates \(\{\bm{\theta}^k\}\) that generated by $H$. Furthermore, we show $\{\bm{\theta}^k\}$ is bounded and establish the connection between the fixed-point of $H$ and the first-order stationary point of \eqref{equation1}.

\begin{Lemma} \label{Lemma1}
	Let Assumption \ref{Assumption1} and \ref{Assumption2} hold, then the following statements hold:
        \begin{itemize}\item [{\rm (i)}]
            The sequence $\{\bm{x}^k\}$ generated by $H$ is bounded, and ${F(\bm{\theta}^k)}$ is monotonically decreasing. Additionally, for all $k\in\mathbb{N}$, there exists a constant $C>0$ such that $\|\nabla Q_k(\bm{x}^{k+1})\|_{\infty}\leq C$.
            \item [{\rm (ii)}]
            Let $\bm{\theta}^* = [\bm{x}^*,0]$ be the cluster point of the sequence $\{\bm{\theta}^k\}$ that generated by $H$. $\bm{\theta}^* = [\bm{x}^*,0]$ is the fixed-point of $H$ and $\bm{x}^*$ satisfies the ﬁrst-order necessary optimality condition for \eqref{equation1}
            \end{itemize}
\end{Lemma}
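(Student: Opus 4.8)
The plan is to prove the two parts of Lemma~\ref{Lemma1} in the order stated, using the majorization structure already established in \eqref{regularization approximation}--\eqref{equation11} together with Assumptions~\ref{Assumption1} and~\ref{Assumption2}.

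For part (i), I would first establish the monotone decrease of $F(\bm{\theta}^k)$ along the iterates generated by $H$ (i.e. by the plain IRL1 step, since inside the proof of this lemma $\bm{x}^{k+1}=H_{\bm{x}}^k$). The key inequality is the majorization \eqref{regularization approximation}: evaluating it at $\bm{x}=\bm{x}^{k+1}$ gives $F(\bm{x}^{k+1},\bm{\epsilon}^k)\le G(\bm{x}^{k+1};\bm{x}^k,\bm{\epsilon}^k)$, while $G(\bm{x}^{k+1};\bm{x}^k,\bm{\epsilon}^k)\le G(\bm{x}^k;\bm{x}^k,\bm{\epsilon}^k)=F(\bm{x}^k,\bm{\epsilon}^k)$ because $\bm{x}^{k+1}$ minimizes the convex subproblem \eqref{equation3} and the majorant is tight at $\bm{x}^k$. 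Then, since $\bm{\epsilon}^{k+1}=\mu\bm{\epsilon}^k$ with $\mu\in(0,1)$ and $\phi$ is strictly increasing (Assumption~\ref{Assumption1}(ii)), we get $F(\bm{x}^{k+1},\bm{\epsilon}^{k+1})\le F(\bm{x}^{k+1},\bm{\epsilon}^k)\le F(\bm{x}^k,\bm{\epsilon}^k)$, so $\{F(\bm{\theta}^k)\}$ is nonincreasing. Boundedness of $\{\bm{x}^k\}$ then follows because all iterates stay in the level set $\mathcal{L}(F(\bm{x}^0,\bm{\epsilon}^0))$, which is bounded by Assumption~\ref{Assumption2}(i) (using $F(\bm{x})\le F(\bm{x},\bm{\epsilon})$ since $\phi$ increasing and $\bm{\epsilon}\ge 0$). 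For the bound on $\|\nabla Q_k(\bm{x}^{k+1})\|_\infty$, I would use the optimality condition \eqref{equation11}: $\nabla Q_k(\bm{x}^{k+1})=-\lambda W^k\xi^k$ with $\|\xi^k\|_\infty\le 1$, so it suffices to bound the weights $\omega_i^k=\phi'(|x_i^k|+\epsilon_i^k)$. Since $\{\bm{x}^k\}$ is bounded and $\{\bm{\epsilon}^k\}$ is bounded (it decreases geometrically from $\bm{\epsilon}^0$), and $\phi'$ is continuous and nonincreasing on $(0,\infty)$ by concavity with a finite Fréchet subgradient bound at $0$, the quantities $|x_i^k|+\epsilon_i^k$ lie in a bounded set and $\phi'$ is bounded there; taking $C=\lambda\max_i\sup_k\omega_i^k$ gives the claim.

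For part (ii), let $\bm{\theta}^*=[\bm{x}^*,\bm{0}]$ be a cluster point of $\{\bm{\theta}^k\}$ — the second component is $\bm{0}$ because $\bm{\epsilon}^k=\mu^k\bm{\epsilon}^0\to\bm{0}$. To see $\bm{\theta}^*$ is a fixed point of $H$, I would argue that $H_{\bm{\epsilon}}(\bm{x}^*,\bm{0})=\mu\cdot\bm{0}=\bm{0}$ trivially, and $H_{\bm{x}}(\bm{x}^*,\bm{0})=\bm{x}^*$: from the telescoped descent in (i), $\sum_k (F(\bm{\theta}^k)-F(\bm{\theta}^{k+1}))<\infty$, and combining the majorant gap with the strong convexity constant $M>L_f/2$ of $Q_k$ (Assumption~\ref{Assumption2}(ii)) yields $F(\bm{\theta}^k)-F(\bm{\theta}^{k+1})\ge (M-L_f/2)\|\bm{x}^{k+1}-\bm{x}^k\|^2 - o(\bm{\epsilon}^k)$-type terms, hence $\|\bm{x}^{k+1}-\bm{x}^k\|\to 0$ along a subsequence and $\|H_{\bm{x}}^k-\bm{x}^k\|\to 0$. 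Passing to the limit along the subsequence converging to $\bm{x}^*$, using continuity of $\nabla f$ and of $\phi'$ away from the relevant points, gives $H_{\bm{x}}(\bm{x}^*,\bm{0})=\bm{x}^*$. Finally, to get the first-order optimality condition for \eqref{equation1} at $\bm{x}^*$, I would pass to the limit in \eqref{equation11}: $\nabla Q_k(\bm{x}^{k+1})=\nabla f(\bm{x}^k)+L(\bm{x}^{k+1}-\bm{x}^k)\to\nabla f(\bm{x}^*)$, while $\lambda W^k\xi^k\to$ an element of $\lambda\,\partial\!\left(\sum_i\phi(|x_i^*|)\right)$ in the Fréchet sense, invoking Assumption~\ref{Assumption1}(ii) (Fréchet subdifferentiability of $\phi$ at $0$) to handle indices with $x_i^*=0$; this gives $\bm{0}\in\partial_F F(\bm{x}^*)$.

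The main obstacle I anticipate is the limiting argument in part (ii) at the coordinates where $x_i^*=0$: there $\phi'(|x_i^k|+\epsilon_i^k)$ need not converge to $\phi'(0^+)$ in a controlled way (indeed $\phi'(0^+)$ may be $+\infty$, e.g. for LPN), so one must be careful to extract the correct Fréchet subgradient of the composite regularizer in the limit rather than naively passing $\phi'$ through. This is precisely where Assumption~\ref{Assumption1}(ii) (Fréchet subdifferentiability of $\phi$ at $0$, so $\partial_F\phi(0)$ is a nonempty bounded set) and the uniform bound $C$ from part (i) are needed: the bound forces $\lambda\omega_i^k|\xi_i^k|\le C$ so the product $\lambda W^k\xi^k$ stays bounded and has a convergent subsequence whose limit lies in the appropriate subdifferential. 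A secondary technical point is verifying that the descent inequality correctly accounts for the change in $\bm{\epsilon}$; since $\phi$ is Lipschitz on bounded intervals bounded away from $0$ is not available, but monotonicity of $\phi$ alone suffices to show $F(\bm{x}^{k+1},\bm{\epsilon}^{k+1})\le F(\bm{x}^{k+1},\bm{\epsilon}^k)$ without needing any quantitative estimate, which keeps the argument clean.
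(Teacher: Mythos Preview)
Your approach to part (i) aligns with the paper's, but your derivation of the bound $C$ has a genuine gap. You argue that $\|\nabla Q_k(\bm{x}^{k+1})\|_\infty=\|\lambda W^k\xi^k\|_\infty\le\lambda\max_i\omega_i^k$ and then try to bound the weights $\omega_i^k=\phi'(|x_i^k|+\epsilon_i^k)$ by appealing to a ``finite Fr\'echet subgradient bound at $0$.'' Assumption~\ref{Assumption1}(ii) only asserts $\partial_F\phi(0)\neq\emptyset$, not that it is bounded; for LPN this set is all of $\mathbb{R}$, and indeed $\phi'(0^+)=+\infty$, as you yourself note in your obstacle paragraph. So the weights need not stay bounded and your chosen $C$ can be infinite. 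The paper sidesteps this entirely: once $\{\bm{x}^k\}$ is bounded one bounds $\nabla Q_k(\bm{x}^{k+1})=\nabla f(\bm{x}^k)+L(\bm{x}^{k+1}-\bm{x}^k)$ directly via boundedness of $\{\bm{x}^k\}$ and continuity of $\nabla f$, without ever looking at the weights. This is both simpler and immune to the blow-up. Your later argument (using $C$ to control $\lambda W^k\xi^k$ in the limit) then becomes non-circular. Also, your remark about ``$-\,o(\bm{\epsilon}^k)$-type terms'' in the descent estimate is unnecessary: the paper obtains the clean inequality $F(\bm{\theta}^k)-F(\bm{\theta}^{k+1})\ge(M-L_f/2)\|\bm{x}^{k+1}-\bm{x}^k\|^2$ with no residual in $\bm{\epsilon}$, because the concavity bound \eqref{equation9} and the strong-convexity bound \eqref{equation12} are applied at the \emph{same} $\bm{\epsilon}^k$, and the passage $\bm{\epsilon}^k\to\bm{\epsilon}^{k+1}$ only helps by monotonicity of $\phi$.

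For part (ii) you take a genuinely different route from the paper. The paper argues by contradiction that $\bm{x}^*$ minimizes $G(\cdot;\bm{x}^*,0)$ (restricted to the subspace where the $\mathcal{A}^*$-coordinates vanish), and then invokes an external result (\cite{wang2021nonconvex}, Lemma~7) for first-order optimality. You instead pass directly to the limit in the optimality condition \eqref{equation11}, using $\|\bm{x}^{k+1}-\bm{x}^k\|\to 0$ and the uniform bound $C$ to extract a subsequential limit of $\lambda W^k\xi^k$. Your approach is more self-contained and arguably cleaner; with the corrected derivation of $C$ from the previous paragraph, it goes through: on $\mathcal{I}^*$ one has $\omega_i^k\to\phi'(|x_i^*|)$ and $\xi_i^k\to\mathrm{sign}(x_i^*)$, while on $\mathcal{A}^*$ the product $\lambda\omega_i^k\xi_i^k=-[\nabla Q_k(\bm{x}^{k+1})]_i$ stays bounded by $C$ and any subsequential limit automatically lies in $\lambda\,\partial_F[\phi(|\cdot|)](0)$ since that set contains $[-\lambda\phi'(0^+),\lambda\phi'(0^+)]$. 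The paper's contradiction argument buys robustness to the precise form of the subdifferential calculus but at the cost of a longer detour and an external citation.
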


\begin{proof}

(i)From Assumption \ref{Assumption1} and \ref{Assumption2}, we deduce that

\begin{equation*}
    \begin{aligned}
        f(\bm{x}^k) - f(\bm{x}^{k+1}) \geq \nabla f(\bm{x}^k)^T(\bm{x}^k-\bm{x}^{k-1})-\frac{L_f}{2}\|\bm{x}^k-\bm{x}^{k+1}\|^2,
\end{aligned}
\end{equation*}

\begin{equation*}
    \begin{aligned}
        Q_k(\bm{x}^k) - Q_k(\bm{x}^{k+1}) \geq \nabla f(\bm{x}^k)^T(\bm{x}^k-\bm{x}^{k-1})-\frac{M}{2}\|\bm{x}^k-\bm{x}^{k+1}\|^2.
\end{aligned}
\end{equation*}
Then it is straightforward to verify that

\begin{equation}
    \begin{aligned} \label{equation8}
        f(\bm{x}^k) - f(\bm{x}^{k+1})  \geq Q_k(\bm{x}^k) - Q_k(\bm{x}^{k+1})+\frac{M-L_f}{2}\|\bm{x}^k-\bm{x}^{k+1}\|^2.
\end{aligned}
\end{equation}
Furthermore, the concavity of \(\phi\) yields

\begin{equation}
    \begin{aligned} \label{equation9}
        \sum^n_{i=1}\phi(|x^{k+1}_i|+ \epsilon^k_i) \leq \sum^n_{i=1}\phi(|x^k_i|+\epsilon^k_i) + \sum^n_{i=1}\omega^k_i(|x^{k+1}_i| -|x^{k}_i|).
\end{aligned}
\end{equation}
Referring to equations (\ref{equation8}) and (\ref{equation9}):
\begin{equation}
    \begin{aligned} \label{equation10}
        f(\bm{x}^k,\bm{\epsilon}^k) - f(\bm{x}^{k+1},\bm{\epsilon}^{k+1})  \geq G_k(\bm{x}^k,\bm{\epsilon}^{k}) - G_k(\bm{x}^{k+1},\bm{\epsilon}^{k+1})+\frac{M-L_f}{2}\|\bm{x}^k-\bm{x}^{k+1}\|^2.
\end{aligned}
\end{equation}
From the ﬁrst-order necessary optimality condition (\ref{equation11}) of the subproblem (\ref{equation3}), we can derive

\begin{equation}
    \begin{aligned} \label{equation12}
        & G(\bm{x}^k;\bm{x}^k,\bm{\epsilon}^k) - G(\bm{x}^{k+1};\bm{x}^k,\bm{\epsilon}^k) \\
        =& Q_k(\bm{x}^k)-Q_k(\bm{x}^{k+1}) + \lambda \sum^n_{i=1}\omega^k_i(|x^{k}_i| -|x^{k+1}_i|) \\
        \geq & \nabla Q_k(\bm{x}^{k+1})(\bm{x}^k-\bm{x}^{k+1})+\frac{M}{2}\|\bm{x}^k-\bm{x}^{k+1}\|^2 + \lambda\sum^n_{i=1}\omega^k_i(|x^{k}_i| -|x^{k+1}_i|)  \\
        = & (\nabla Q_k(\bm{x}^{k+1}) + \lambda W^k \xi^k )^T(\bm{x}^k-\bm{x}^{k+1})+\frac{M}{2}\|\bm{x}^k-\bm{x}^{k+1}\|^2  \\
        = &  \frac{M}{2}\|\bm{x}^k-\bm{x}^{k+1}\|^2 .
    \end{aligned}
    \end{equation}
The first inequality follows from the convexity of \(Q_k(\cdot)\) and the absolute value inequality, the second inequality is a result of the strong convexity of \(Q_k(\cdot)\). By combining equations (\ref{equation10}) and (\ref{equation12}), we obtain
\begin{equation}
    \begin{aligned} \label{equation13}
        F(\bm{x}^k,\bm{\epsilon}^k) - F(\bm{x}^{k+1},\bm{\epsilon}^{k+1})  \geq (M-\frac{L_f}{2})\|\bm{x}^k-\bm{x}^{k+1}\|^2.
\end{aligned}
\end{equation}
Therefore, $\{F(\bm{x}^k,\bm{\epsilon}^k)\}$ is monotonically decreasing. Combining this with Assumption \ref{Assumption2} (i), we have $\forall k\in \mathbb{N}$, $-\infty < F(\bm{x}^k) \leq F(\bm{x}^k,\bm{\epsilon}^k) \leq F(\bm{x}^0,\bm{\epsilon}^0)$ and $\{\bm{x}^k\} \subset \mathcal{L}(F(\bm{x}^0,\bm{\epsilon}^0))$. By summing up the above inequality from $k=0$ to $k \rightarrow \infty$ we imply

\begin{equation}
    \begin{aligned} \label{equation14}
        \infty > F(\bm{x}^0,\bm{\epsilon}^0) - \displaystyle\lim_{k\rightarrow \infty}F(\bm{x}^{k+1},\bm{\epsilon}^{k+1})  \geq \sum^{\infty}_{i=0} (M-\frac{L_f}{2})\|\bm{x}^k-\bm{x}^{k+1}\|^2.
\end{aligned}
\end{equation}
Then, we have $\lim \limits_{k\rightarrow \infty}\|\bm{x}^{k+1}-\bm{x}^{k}\|=0$. The sequences $\{\bm{x}^k\}$ generated by $H$ is bounded. Furthermore, $\forall k\in\mathbb{N}, \exists C>0$ such that $\|\nabla Q_k(\bm{x}^{k+1})\|_{\infty} \leq C$.

(ii) We prove this by contradiction, define $\mathcal{A}^*=\{i:x^*_i=0\}, \mathcal{I}^*=\{i:x^*_i\ne0\}$ assume there exists $\hat{\bm{x}}$ such that $\hat{x}_i=0$ for all $i\in\mathcal{A}^*$ and $G(\bm{x}^*;\bm{x}^*;0)-G(\hat{\bm{x}} ;\bm{x}^*;0)>\varepsilon>0$. Suppose there exists a subsequence $\{\bm{x}^k\}_{S},S\in\mathbb{N}$ such that $x^k_i\xrightarrow{S}x^*_i$ and $\omega^k_i\xrightarrow{S}\omega^*_i$. Given that $\lim \limits_{k\rightarrow \infty}\|\bm{x}^{k+1}-\bm{x}^{k}\|=0$ and (\ref{equation12}), we can deduce that there exist $k_1>0$, such that for all $k>k_1$, it holds that 

\begin{equation}
    \begin{aligned} \label{A1}
G(\bm{x}^k;\bm{x}^k,\bm{\epsilon}^k) - G(\bm{x}^{k+1};\bm{x}^k,\bm{\epsilon}^k) \ge \varepsilon/4.
\end{aligned}
\end{equation}
There exist $k_2>0$, such that for all $k>k_2$ and $k\in S$, it holds that 
\begin{equation}
    \begin{aligned} \label{A2}
        (\nabla f(\bm{x}^*)-\nabla f(\bm{x}^k))\hat{\bm{x}}&>-\varepsilon/12,\\
        \sum_{i\in\mathcal{I}^*}\left[\omega(x^k_i,\epsilon^k_i)|x^k_i|-\omega(x^*_i,0)\right]|x^*_i|&>-\varepsilon/12,\\
        f(\bm{x}^k)-f(\bm{x}^*)&>-\varepsilon/12.\\
\end{aligned}
\end{equation}
Subsequently, we have

\begin{equation}
    \begin{aligned} \label{A3}
        & G(\bm{x}^*;\bm{x}^*,0) - G(\hat{\bm{x}};\bm{x}^k,\bm{\epsilon}^k) \\
        =& \left[Q_*(\bm{x}^*)+\lambda \sum_{i\in\mathcal{I}^*}\omega(x^*_i,0)|x^*_i|\right]\\
        &-\left[ \nabla f(\bm{x}^k)\hat{\bm{x}}+\frac{L}{2}\|\hat{\bm{x}}-\bm{x}^k\|^2 + \lambda \sum_{i\in\mathcal{I}^*}(\omega(x^*_i,0)-\omega(x^*_i,0)+\omega(x^k_i,\epsilon^k_i))(|\hat{x}_i|) \right] \\
        = & G(\bm{x}^*;\bm{x}^*,0)- G(\hat{\bm{x}};\bm{x}^*,0) \\
        &+ (\nabla f(\bm{x}^*)-\nabla f(\bm{x}^k))\hat{\bm{x}}+ \frac{L}{2}\|\hat{\bm{x}}-\bm{x}^*\|^2 + \lambda \sum_{i\in\mathcal{I}^*}(\omega(x^*_i,0)-\omega(x^k_i,\epsilon^k_i))(|\hat{x}_i|) \\
        \ge & \varepsilon-\frac{\varepsilon}{12}-\frac{\varepsilon}{12} = \frac{5\varepsilon}{6}.
\end{aligned}
\end{equation}
Besides, we can derive that

\begin{equation}
    \begin{aligned} \label{A4}
        & G(\bm{x}^k;\bm{x}^k,\bm{\epsilon}^k) - G(\bm{x}^*;\bm{x}^*,0) \\
        =& \left[Q_k(\bm{x}^k)+\lambda \sum_{i\in\mathcal{I}^*}\omega(x^k_i,\epsilon^k_i)|x^k_i| +\lambda \sum_{i\in\mathcal{A}^*}\omega(x^k_i,\epsilon^k_i)|x^k_i|\right]\\
        &-\left[ Q_*(\bm{x}^*) + \lambda \sum_{i\in\mathcal{I}^*}\omega(x^*_i,0)|x^*_i|\right] \\
        \ge & \left[Q_k(\bm{x}^k)+\lambda \sum_{i\in\mathcal{I}^*}\omega(x^k_i,\epsilon^k_i)|x^k_i|\right]\
        -\left[ Q_*(\bm{x}^*) + \lambda \sum_{i\in\mathcal{I}^*}\omega(x^*_i,0)|x^*_i|\right] \\
        \ge & -\frac{\varepsilon}{6}.
\end{aligned}
\end{equation}
Combining \eqref{A3} and \eqref{A4}, we can deduce that for all $k>\max(k_1,k_2), k\in S$, it holds that

\begin{equation}
\begin{aligned} \label{A6}
     G(\bm{x}^k;\bm{x}^k,\bm{\epsilon}^k) - G(\bm{x}^{k+1};\bm{x}^k,\bm{\epsilon}^k)
    &= G(\bm{x}^k;\bm{x}^k,\bm{\epsilon}^k) - G(\bm{x}^*;\bm{x}^*,0) + G(\bm{x}^*;\bm{x}^*,0) - G(\bm{x}^{k+1};\bm{x}^k,\bm{\epsilon}^k) \\
    &\ge \frac{5\varepsilon}{6} - \frac{\varepsilon}{6} = \frac{2\varepsilon}{3},
\end{aligned}
\end{equation}
contradicting \eqref{A1}. Therefore $\bm{\theta}^*$ is the fixed-point of $H$. According to a special case of Lemma 7 from \cite{wang2021nonconvex}, we can derive that $\bm{x}^*$ satisfies the ﬁrst-order necessary optimality condition of \eqref{equation1}. 
\end{proof}


Next, we proceed to analyze the local continuous differentiability of the mapping $H$. A notable property of the IRL1 is the locally stable sign property of the $\{\bm{x}^k\}$ for LPN approximation, which means that ${\rm sign}(\bm{x}^k)$ remain unchanged for sufficiently large $k$. At the tail of the iteration, IRL1 equates to solving a smooth problem in the reduced space $\mathbb{R}^{\mathcal{I}^*}$. We will utilize this characteristic to infer the local continuous differentiability of $H$. The non-degeneracy condition of $F$  is required(Define in Assumption \ref{Assumption1}). This is a common assumption in nonconvex problems. For LPN approximation, this condition naturally holds (due to $p(0^+)^{p-1} \rightarrow \infty$). For general sparse approximation, , the values of $\lambda$ and $p$ are often appropriately chosen to guarantee the sparsity. Consequently, the values of $\lambda$ or $\phi'(0^+)$ are large enough to satisfy the condition $\phi'(0^+) > C/\lambda$. Hence this assumption is easily satisfied. This assumption infers a lower bound property of $|x^*_i|$ away from 0 for all $i\in \mathcal{I}^*$. We have the following lemma.

    \begin{Lemma} \label{Lemma2} Let Assumption \ref{Assumption1} and \ref{Assumption2} hold. $H$ and fixed-point $\bm{x}^*$ hold the following statements.
        \begin{itemize}\item [{\rm (i)}]
            If there exists $(\overline{x}_i;\overline{\epsilon}_i)$ such that $\omega(\overline{x}_i,\overline{\epsilon}_i)> C/\lambda$, then for all $x_i\le\overline{x}_i$ and $\epsilon_i\le\overline{\epsilon}_i$, it holds that $[H_{\bm{x}}(\bm{x},\bm{\epsilon})]_i  = 0$.

            \item [{\rm (ii)}]
            There exists $\delta >0$ such that $\omega(\delta,0) = C/\lambda$. For all $i\in\mathcal{I}(\bm{x}^*)$, $|x^*_i|$ has the lower bound, $|x^*_i|\geq \delta.$
            \end{itemize}
    \end{Lemma}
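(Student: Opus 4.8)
The plan is to exploit the closed-form solution \eqref{equation6} of the IRL1 subproblem together with the uniform bound $\|\nabla Q_k(\bm{x}^{k+1})\|_\infty \le C$ from Lemma \ref{Lemma1}(i), and then combine this with the monotonicity and strict increase of $\phi'$ (equivalently, the monotone behaviour of the weight function $\omega$) guaranteed by Assumption \ref{Assumption1}(ii). For part (i), I would argue directly from \eqref{equation6}: the coordinate $[H_{\bm{x}}(\bm{x},\bm{\epsilon})]_i$ is nonzero only when $|x_i| > \tfrac1L[\nabla_i f(\bm{x}) \pm \lambda\omega_i]$ forces one of the two nonzero branches, i.e.\ only when $\lambda\omega(x_i,\epsilon_i) \le |\nabla_i Q(\bm{x};\bm{x})| \le C$ (here using that $\nabla_i f(\bm{x})$ appears inside $\nabla Q$ and the relevant quantity is bounded by $C$). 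Contrapositively, if $\omega(x_i,\epsilon_i) > C/\lambda$ then the ``otherwise'' branch is selected and $[H_{\bm{x}}(\bm{x},\bm{\epsilon})]_i = 0$. The monotonicity of $\omega$ in both arguments — which follows because $\phi'$ is nonincreasing on $(0,+\infty)$ by concavity, so $\omega(x_i,\epsilon_i) = \phi'(|x_i|+\epsilon_i)$ decreases as $|x_i|+\epsilon_i$ grows — lets me upgrade the pointwise statement to the monotone version: if $\omega(\overline{x}_i,\overline{\epsilon}_i) > C/\lambda$ then $\omega(x_i,\epsilon_i) \ge \omega(\overline{x}_i,\overline{\epsilon}_i) > C/\lambda$ for every $x_i \le \overline{x}_i$, $\epsilon_i \le \overline{\epsilon}_i$, hence the conclusion $[H_{\bm{x}}(\bm{x},\bm{\epsilon})]_i = 0$ propagates.

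For part (ii), the existence of $\delta$ with $\omega(\delta,0) = C/\lambda$ comes from the intermediate value theorem: $t \mapsto \omega(t,0) = \phi'(t)$ is continuous on $(0,+\infty)$ (by smoothness of $\phi$), it is strictly decreasing, and by the non-degeneracy Assumption \ref{Assumption1}(iii) — which as the paragraph before the lemma explains translates into $\phi'(0^+) > C/\lambda$ — together with $\phi'$ being bounded below by $0$ (or tending to a limit $< C/\lambda$), the value $C/\lambda$ is attained at a unique $\delta > 0$. To get the lower bound on $|x_i^*|$ for $i \in \mathcal{I}(\bm{x}^*)$, I argue by contradiction: suppose $0 < |x_i^*| < \delta$. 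Since $\bm{\theta}^* = [\bm{x}^*,0]$ is a fixed point of $H$ by Lemma \ref{Lemma1}(ii), we have $x_i^* = [H_{\bm{x}}(\bm{x}^*,0)]_i$. But $\omega(|x_i^*|,0) = \phi'(|x_i^*|) > \phi'(\delta) = C/\lambda$ because $\phi'$ is strictly decreasing and $|x_i^*| < \delta$; applying part (i) with $(\overline{x}_i,\overline{\epsilon}_i) = (|x_i^*|,0)$ gives $[H_{\bm{x}}(\bm{x}^*,0)]_i = 0$, so $x_i^* = 0$, contradicting $i \in \mathcal{I}(\bm{x}^*)$. Hence $|x_i^*| \ge \delta$.

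The main obstacle I anticipate is part (i): making the bookkeeping in \eqref{equation6} fully rigorous requires being careful that the quantity controlled by $C$ is exactly the one appearing in the thresholding test. In \eqref{equation6} the test involves $x_i^k - \tfrac1L[\nabla_i f(\bm{x}^k) \pm \lambda\omega_i^k]$, whereas Lemma \ref{Lemma1} bounds $\|\nabla Q_k(\bm{x}^{k+1})\|_\infty$; I need to unwind $\nabla_i Q_k(\bm{x}^{k+1}) = \nabla_i f(\bm{x}^k) + L(x_i^{k+1} - x_i^k)$ and relate the nonzero branches of \eqref{equation6} to the event $\lambda\omega_i^k \le |\nabla_i f(\bm{x}^k) - L x_i^k|$ or a comparable inequality, checking that this latter quantity is what is bounded by $C$ (or else that the bound transfers with at most a harmless constant adjustment absorbed into the definition of $C$). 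Once that identification is pinned down, the rest is a short monotonicity argument. It may also be worth noting explicitly that part (i) is a statement about the map $H$ evaluated at an \emph{arbitrary} point (not just along the generated sequence), so the bound $C$ used there should be understood as the uniform constant from Lemma \ref{Lemma1}(i), valid because all iterates — and the fixed point — lie in the bounded level set $\mathcal{L}(F(\bm{x}^0,\bm{\epsilon}^0))$.
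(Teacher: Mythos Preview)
Your proposal is correct and follows essentially the same line as the paper. For (i) the paper argues directly from the optimality condition \eqref{equation11} rather than unpacking the closed form \eqref{equation6}---if the output coordinate is nonzero then $|\xi_i^k|=1$, so \eqref{equation11} forces $\lambda\omega_i^k = |\nabla_i Q_k(\bm{x}^{k+1})| \le C$, which sidesteps exactly the bookkeeping you flagged---and the monotonicity step and the contrapositive for (ii) match yours (the paper simply omits the IVT justification for $\delta$).
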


\begin{proof}
    (i) We prove this by contradiction. Assume that we have $\omega(\overline{x}_i,\overline{\epsilon}_i) > C/\lambda$. If $[H_{\bm{x}}(\overline{x},\overline{\epsilon})]_i  \neq 0$, it would contradict the ﬁrst-order necessary optimality condition (\ref{equation11}). Therefore, there must have $[H(\overline{x},\overline{\epsilon})]_i = 0$. Furthermore, for $x_i\le\overline{x}_i$ and $\epsilon_i\le\overline{\epsilon}_i$, we have

\begin{equation}
	\begin{aligned} \label{equation16}
		\omega(x_i,\epsilon_i) 
        = \phi'(|x_i|+\epsilon_i)
        \geq \phi'(|\overline{x}_i|+\overline{\epsilon}_i)
        = \omega(\overline{x}_i,\overline{\epsilon}_i) 
        > \frac{C}{\lambda}.
\end{aligned}  
\end{equation}
Consequently, we have $ [H(\bm{x},\bm{\epsilon})]_i = 0$.

(ii) For $i\in\mathcal{I}^*$, it holds that $[H_{\bm{x}}(\bm{x}^*,0)]_i = x^*_i\ne 0$. Hence, we can deduce that $x^*_i\geq \delta$.

\end{proof} 

Based on the boundness of $|x^*_i|$ for all $i\in \mathcal{I}^*$, we establish the locally continuous differentiability of $H$:

\begin{Lemma}\label{Lemma3} 
    Let Assumption \ref{Assumption1} and \ref{Assumption2} hold. There exists a $ \rho < \min (\delta,\mathop{{\rm min}}_{i\in\mathcal{I}^*} x^*_i-\delta)$. For all $\bm{\theta}\in\mathcal{B}(\rho):=\{\bm{\theta}\in \mathbb{R}^{2n} | \|\bm{\theta}-\bm{\theta}^*\|\leq \rho\}$, $ {\rm sign}(H_{\bm{x}}(\bm{\theta})) = {\rm sign}(\bm{x}) $ and $H(\bm{\theta})$ is continuously differentiable.
\end{Lemma}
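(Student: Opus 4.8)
The plan is to split the coordinates according to the support of the fixed point $\bm{x}^*$ and show that near $\bm{\theta}^* = [\bm{x}^*, 0]$ the map $H_{\bm x}$ has a frozen sparsity pattern, so that block by block it reduces to a composition of smooth maps. I would set $\mathcal{A}^* = \{i : x_i^* = 0\}$ and $\mathcal{I}^* = \{i : x_i^* \ne 0\}$; Lemma~\ref{Lemma2}(ii) makes $\min_{i\in\mathcal{I}^*}|x_i^*| - \delta > 0$, so the prescribed bound on $\rho$ is meaningful, and I would allow $\rho$ to shrink a finite number of times along the way. The main tool is the closed form \eqref{equation6}, which exhibits $H_{\bm x}$ coordinatewise as a soft‑threshold $[H_{\bm x}(\bm{x},\bm{\epsilon})]_i = S_{\lambda\omega(x_i,\epsilon_i)/L}\!\big(x_i - \tfrac1L\nabla_i f(\bm{x})\big)$, with $S_\tau(z) = \mathrm{sign}(z)\max\{|z|-\tau,0\}$ and $\omega(x_i,\epsilon_i) = \phi'(|x_i|+\epsilon_i)$.

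For $i \in \mathcal{A}^*$ I would argue the coordinate stays clamped at $0$ on the whole ball: on $\mathcal{B}(\rho)$ the quantity $|x_i| + \epsilon_i$ is at most a fixed multiple of $\rho$, hence below $\delta$ once $\rho$ is small enough, so by concavity of $\phi$ (monotonicity of $\phi'$) together with $\phi'(\delta) = C/\lambda$ from Lemma~\ref{Lemma2}(ii) the weight obeys $\omega(x_i,\epsilon_i) > C/\lambda$, and Lemma~\ref{Lemma2}(i) with $(\overline{x}_i,\overline{\epsilon}_i) = (|x_i|,\epsilon_i)$ gives $[H_{\bm x}(\bm{\theta})]_i = 0$; being identically zero, this block is trivially $C^1$. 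For $i \in \mathcal{I}^*$ I would instead use continuity: on $\mathcal{B}(\rho)$ one has $|x_i| \ge |x_i^*| - \rho > \delta > 0$, so $x_i$ cannot change sign and $\bm{\theta}\mapsto\omega(x_i,\epsilon_i)$ is $C^1$ since $\phi$ is smooth on $(0,\infty)$ (Assumption~\ref{Assumption1}(ii)). Because $[H_{\bm x}(\bm{\theta}^*)]_i = x_i^* \ne 0$, the soft‑threshold argument $z_i(\bm{x}) := x_i - \tfrac1L\nabla_i f(\bm{x})$ satisfies $|z_i(\bm{x}^*)| > \tfrac{\lambda}{L}\omega(x_i^*,0)$ strictly with $\mathrm{sign}(z_i(\bm{x}^*)) = \mathrm{sign}(x_i^*)$; continuity of $\nabla f$ (Assumption~\ref{Assumption1}(i)) and of $\omega$ on this block let me shrink $\rho$ so the strict inequality and the sign persist, whence $[H_{\bm x}(\bm{\theta})]_i = z_i(\bm{x}) - \tfrac{\lambda}{L}\omega(x_i,\epsilon_i)\,\mathrm{sign}(x_i^*)$ on $\mathcal{B}(\rho)$ — a composition of $C^1$ maps (using smoothness of $f$ and of $\phi$ on $(0,\infty)$), nonzero, with sign $\mathrm{sign}(x_i^*)$, which on $\mathcal{B}(\rho)$ is $\mathrm{sign}(x_i)$.

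Assembling the blocks, $H_{\bm x}$ is $C^1$ on $\mathcal{B}(\rho)$ and $\mathrm{sign}(H_{\bm x}(\bm{\theta}))$ is the constant vector $\mathrm{sign}(\bm{x}^*)$ there — equal to $\mathrm{sign}(\bm{x})$ on the inactive coordinates and frozen to $0$ on the active ones — which is the content of the sign claim; since $H_{\bm\epsilon}(\bm{\theta}) = \mu\bm{\epsilon}$ is linear, $H$ is continuously differentiable on $\mathcal{B}(\rho)$. I expect the delicate point to be the active block: Lemma~\ref{Lemma2}(i) was established with $C$ the bound on $\|\nabla Q_k(\bm{x}^{k+1})\|_\infty$ \emph{along the $H$‑generated sequence} (Lemma~\ref{Lemma1}(i)), whereas here it must be invoked at an arbitrary $\bm{\theta}\in\mathcal{B}(\rho)$, so one needs the uniform bound $\|\nabla Q(H_{\bm x}(\bm{\theta});\bm{x})\|_\infty \le C$ over the compact set $\mathcal{B}(\rho)$ — available from continuity and boundedness of $\bm{\theta}\mapsto H_{\bm x}(\bm{\theta})$ plus continuity of $\nabla f$, provided $C$ (hence $\delta$, hence the admissible $\rho$) is chosen consistently with this neighbourhood. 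Alternatively one can bypass the sequence‑specific constant by invoking Assumption~\ref{Assumption1}(iii) directly: $\bm{0}\in\mathrm{rint}\,\partial_F F(\bm{x}^*)$ says the optimality condition \eqref{equation11} for the limiting subproblem holds with strict slack on $\mathcal{A}^*$, and it is this strict slack that propagates to a neighbourhood. Reconciling that constant, and checking that the finitely many shrinkages of $\rho$ can be taken simultaneously over all $n$ coordinates, is the only genuine work; everything else is continuity of soft‑thresholding and smoothness of $\phi$ away from $0$.
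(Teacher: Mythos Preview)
Your proposal is correct and follows the same coordinate-splitting strategy as the paper: handle $\mathcal{A}^*$ via Lemma~\ref{Lemma2} to force $[H_{\bm x}]_i\equiv 0$, handle $\mathcal{I}^*$ by continuity, then conclude $C^1$ since the soft-threshold kinks are avoided. The one genuine difference is in the $\mathcal{I}^*$ argument. The paper treats $H$ as a black box: it first picks $\rho$ so that continuity gives $\|H(\bm\theta)-\bm\theta^*\|<\delta$ on $\mathcal{B}(\rho)$, and then observes that a sign flip on coordinate $i$ would force $|[H_{\bm x}(\bm\theta)]_i - x_i^*|\ge |x_i^*|\ge\delta$, a contradiction. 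You instead open up the soft-threshold formula \eqref{equation6} and argue that the strict thresholding inequality $|z_i(\bm x^*)|>\tfrac{\lambda}{L}\omega(x_i^*,0)$ persists under perturbation of $\nabla f$ and $\omega$, producing an explicit smooth formula $[H_{\bm x}(\bm\theta)]_i = z_i(\bm x) - \tfrac{\lambda}{L}\omega(x_i,\epsilon_i)\,\mathrm{sign}(x_i^*)$ on the ball. Your route is slightly longer but delivers the Jacobian structure that Lemma~\ref{Lemma4} needs anyway; the paper's route is terser but defers that computation. Your flag on the constant $C$---bounded along the $H$-sequence in Lemma~\ref{Lemma1}(i) but needed uniformly on $\mathcal{B}(\rho)$ for Lemma~\ref{Lemma2}(i) to apply at arbitrary $\bm\theta$---is apt; the paper does not address it here and effectively patches it later with Assumption~\ref{Assumption3}(iii).
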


\begin{proof} The continuity of $H$ infers that there exists a $\rho$ such that $\|H(\bm{\theta})-H(\bm{\theta}^*)\| = \|H(\bm{\theta})-\bm{\theta}^*\| < \delta$ for all $\bm{\theta} \in \mathcal{B}(\rho)$.

    For $i\in\mathcal{A}^*$, given that $ \rho < \min (\delta,\mathop{{\rm min}}_{i\in\mathcal{I}^*} x^*_i-\delta)$, it is established that $\omega(\bm{x},\bm{\epsilon})> C/\lambda$ and $[H_{\bm{x}}(\bm{\theta})]_i=0$. Consequently, we can deduce that ${\rm sign}([H_{\bm{x}}(\bm{\theta})]_i)= {\rm sign}(x^*_i)$.  
    
    For $i\in\mathcal{I}^*$, we prove by contradiction, assume that ${\rm sign}([H_{\bm{x}}(\bm{\theta})]_i)\neq {\rm sign}(x^*_i)$. Therefore, we can deduce that
\begin{equation}
	\begin{aligned} \label{equation20}
		|[H_{\bm{x}}(\bm{\theta})]_i-x^*_i| =  \sqrt{([H_{\bm{x}}(\bm{\theta})]_i)^2+(x^*_i)^2-2[H_{\bm{x}}(\bm{\theta})]_ix^*_i}>\sqrt{(x^*_i)^2}\geq \sqrt{(\delta)^2} = \delta.
\end{aligned}
\end{equation}
This is in contradiction with $\|H(\bm{\theta})-\bm{\theta}^*\| < \delta$. As a result, we can infer that ${\rm sign}([H_{\bm{x}}(\bm{\theta})]_i) = {\rm sign}(x^*_i)$ for all $i\in\mathcal{I}^*$ and $ {\rm sign}(H_{\bm{x}}(\bm{\theta})) = {\rm sign}(\bm{x}) $.

Given that for all $\bm{\theta} \in \mathcal{B}(\rho)$,  $H(\bm{\theta})$ is not continuous differentiable only when ${\rm sign}(x_i)\neq {\rm sign}(x^*_i)$, $ i\in \mathcal{I}^*$, we can conclude that $H(\bm{\theta})$ is continuously differentiable in $\mathcal{B}(\rho)$. 

\end{proof}

\subsection{Local convergence guarantees}

Subsequently, we analyze the local convergence of the algorithm. We make the following assumption.
\begin{Assumption} \label{Assumption3} There exists  $0< \hat{\rho} \le \rho$ such that for all $\bm{\theta} \in \mathcal{B}(\hat{\rho})$, the following statement hold: 
	\begin{itemize} \item [{\rm (i)}] There exists $\kappa  > 0$ such that $\nabla ^2 F([\bm{x}_{\mathcal{I}^*};\bm{\epsilon}_{\mathcal{I}^*}]) \succeq \kappa I$.
    \item [{\rm (ii)}]
    For all $i \in \mathcal{I}^*$, $\omega(\theta_i)$ is Lipschitz continuous with constant $L_w>0$.
	\item [{\rm (iii)}]
    For all $\bm{\theta} $ and $ \bm{\theta}^k \in \mathcal{B}(\hat{\rho})$, there exists $C >0$ and such that $\|\nabla Q_k(x)\|_\infty \leq C$.
    \item [{\rm (iv)}]
    For all $k \in \mathbb{N}$, there exists an upper bound $ M_{\bm{\alpha}}$ such that $ \sum^{m_k}_{i=0}|\alpha^k_i| \leq M_{\bm{\alpha}}$.
    \end{itemize}
\end{Assumption}

Recent studies have demonstrated the capability of IRL1 algorithm to avoid active strict saddle points and furthermore converge to a local minimum. Therefore, we make Assumption \ref{Assumption3} (i). It is crucial to ensure that the mapping $H$ is a contraction. Subsequently, Assumption \ref{Assumption3} (ii) and (iii) are straightforward in the text of problem \eqref{equation1}. (ii) is naturally satisfied for $x_i\neq 0$. (iii) assumes a upper bound of $|\nabla \omega(\theta_i)|$ and $\|\nabla Q_k(\bm{x})\|_\infty$. It is a similar assumption in Assumption \ref{Assumption2} and which is trivially holds in the local of $\bm{x}^*$. Additionally, Assumption \ref{Assumption3} (iv) is commonly found in the existing literature of Anderson acceleration. It ensures the boundness of $ \sum^{m_k}_{i=0}|\alpha^k_i|$, In experiments, we have not observed a case that the coefficients become large. However, the relevant proof of boundness has not yet been proposed. There are several practical method can be used to enforce it \cite{scieur2016regularized,toth2015convergence}.

In the following, we demonstrate that $H(\bm{\theta})$ is a contraction when $\bm{\theta}$ is close to $\bm{\theta}^*$. Denote $\nabla H$ as the Jacobian matrix of $H$.

\begin{Lemma}\label{Lemma4} Let Assumption \ref{Assumption1}, \ref{Assumption2}, \ref{Assumption3} hold and $L> \max (\kappa, L_{\omega})$. Then, $H(\bm{\theta})$ is a contraction mapping for all $\bm{\theta}\in \mathcal{B}(\hat{\rho})$, i.e. For all $\bm{\theta}_1,\bm{\theta}_2 \in\mathcal{B}(\hat{\rho})$, there exists $\gamma\in(0,1)$ such that

    \begin{equation}\label{equation22}
        \|H(\bm{\theta}_1) - H(\bm{\theta}_2)\| \leq \gamma\|\bm{\theta}_1-\bm{\theta}_2\|.
    \end{equation}
\end{Lemma}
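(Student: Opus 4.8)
The plan is to exploit Lemma~\ref{Lemma3}: on $\mathcal{B}(\hat{\rho})\subseteq\mathcal{B}(\rho)$ the sign pattern ${\rm sign}(H_{\bm{x}}(\bm{\theta}))={\rm sign}(\bm{x}^*)$ is frozen, so the nonsmoothness of the subproblem disappears and $H$ is $C^1$ there. Writing $s_i:={\rm sign}(x_i^*)$ and $\bm{\theta}=[\bm{x},\bm{\epsilon}]$, the first-order condition \eqref{equation11} (equivalently the closed form \eqref{equation6}) then yields, for every $\bm{\theta}\in\mathcal{B}(\hat{\rho})$, $[H_{\bm{x}}(\bm{\theta})]_i = x_i-\tfrac1L\big(\nabla_i f(\bm{x})+\lambda s_i\,\phi'(|x_i|+\epsilon_i)\big)$ for $i\in\mathcal{I}^*$, $[H_{\bm{x}}(\bm{\theta})]_i=0$ for $i\in\mathcal{A}^*$, and $H_{\bm{\epsilon}}(\bm{\theta})=\mu\bm{\epsilon}$. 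Since $\mathcal{B}(\hat{\rho})$ is convex and $H$ is $C^1$ on it, the mean value inequality reduces \eqref{equation22} to a uniform Jacobian bound: if $\sup_{\bm{\theta}\in\mathcal{B}(\hat{\rho})}\|\nabla H(\bm{\theta})\|\le\gamma<1$, then, as the segment joining any $\bm{\theta}_1,\bm{\theta}_2\in\mathcal{B}(\hat{\rho})$ stays in the ball, $\|H(\bm{\theta}_1)-H(\bm{\theta}_2)\|\le\gamma\|\bm{\theta}_1-\bm{\theta}_2\|$.

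Next I would differentiate this closed form. The Jacobian $\nabla H$ is block upper-triangular, with diagonal blocks $\nabla_{\bm{x}}H_{\bm{x}}$ and $\mu I$ and off-diagonal block $\nabla_{\bm{\epsilon}}H_{\bm{x}}$. Restricted to the $\mathcal{I}^*$ coordinates, $\nabla_{\bm{x}}H_{\bm{x}}=I-\tfrac1L\nabla^2_{\mathcal{I}^*\mathcal{I}^*}F(\bm{\theta})$ with $\nabla^2_{\mathcal{I}^*\mathcal{I}^*}F=\nabla^2_{\mathcal{I}^*\mathcal{I}^*}f+\lambda\,{\rm diag}\big(\phi''(|x_i|+\epsilon_i)\big)$; by Assumption~\ref{Assumption3}(i), the $L_f$-smoothness of $f$, the concavity of $\phi$, $L\ge L_f$ and $L>\kappa$, this matrix satisfies $\kappa I\preceq\nabla^2_{\mathcal{I}^*\mathcal{I}^*}F\preceq L_fI\preceq LI$, hence its eigenvalues lie in $[1-L_f/L,\,1-\kappa/L]\subset(-1,1)$ and $\|\nabla_{\bm{x}}H_{\bm{x}}|_{\mathcal{I}^*}\|\le 1-\kappa/L$. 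The $\mathcal{A}^*$ rows of $\nabla_{\bm{x}}H_{\bm{x}}$ vanish and its $(\mathcal{I}^*,\mathcal{A}^*)$ block equals $-\tfrac1L\nabla^2_{\mathcal{I}^*\mathcal{A}^*}f$, of norm $\le L_f/L$; the block $\mu I$ is a contraction since $\mu\in(0,1)$; and $\nabla_{\bm{\epsilon}}H_{\bm{x}}$ equals $-\tfrac\lambda L\,{\rm diag}\big(s_i\phi''(|x_i|+\epsilon_i)\big)$ on $\mathcal{I}^*\times\mathcal{I}^*$ and $0$ elsewhere, of norm $\le\lambda L_\omega/L$ by Assumption~\ref{Assumption3}(ii) (Lipschitzness of $\omega$). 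Thus each diagonal block is a strict contraction and the coupling blocks are bounded by $L_f/L$ and $\lambda L_\omega/L$.

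The remaining step, and the main obstacle, is to assemble these blockwise estimates into a single bound $\|\nabla H(\bm{\theta})\|\le\gamma<1$: block upper-triangularity only gives $\|\nabla H\|\le\max(\|\nabla_{\bm{x}}H_{\bm{x}}\|,\mu)+\|\nabla_{\bm{\epsilon}}H_{\bm{x}}\|$, so one must prevent the $\bm{x}$–$\bm{\epsilon}$ and $\mathcal{I}^*$–$\mathcal{A}^*$ couplings from pushing this sum past one. I would handle this by passing to the equivalent norm $\|(\bm{d}_x,\bm{d}_\epsilon)\|_\beta^2=\|\bm{d}_x\|^2+\beta\|\bm{d}_\epsilon\|^2$ and splitting, via Young's inequality $(a+b)^2\le(1+\tau)a^2+(1+\tau^{-1})b^2$, the $\bm{x}$-output into its $\nabla_{\bm{x}}H_{\bm{x}}$- and $\nabla_{\bm{\epsilon}}H_{\bm{x}}$-parts: choose $\tau$ small so that $(1+\tau)(1-\kappa/L)^2<1$, pick $\gamma\in(\max(\sqrt{1+\tau}\,(1-\kappa/L),\,\mu),\,1)$, and then $\beta$ large enough that $(1+\tau^{-1})(\lambda L_\omega/L)^2\le\beta(\gamma^2-\mu^2)$. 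This is exactly where the hypothesis $L>\max(\kappa,L_\omega)$ together with $L\ge L_f$ is used to force $\gamma\in(0,1)$; the $L_f/L$ contribution from the $(\mathcal{I}^*,\mathcal{A}^*)$ block is handled in the same spirit (it is dominated by the norm of the $\mathcal{I}^*$-rows of $\nabla^2 f$, and vanishes on the subspace $\{\bm{x}_{\mathcal{A}^*}=\bm{0}\}$ to which the generated iterates eventually belong). With the resulting $\gamma$, inequality \eqref{equation22} follows from the reduction in the first paragraph.
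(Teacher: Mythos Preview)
Your approach is essentially the same as the paper's: invoke Lemma~\ref{Lemma3} to freeze the sign pattern and obtain $C^1$-smoothness on $\mathcal{B}(\hat{\rho})$, write $H$ in closed form, compute the Jacobian block by block, and bound each block. The paper's proof stops after bounding the row-blocks $\nabla H_{\bm{\epsilon}}$, $(\nabla H_{\bm{x}})_{\mathcal{A}^*}$ and $(\nabla H_{\bm{x}})_{\mathcal{I}^*}$ separately by quantities strictly less than one, and then simply asserts that $H$ is a contraction; it does not address the issue you correctly identify, namely that the operator norm of a block matrix is not the maximum of its block norms, so the $\bm{x}$--$\bm{\epsilon}$ coupling (and the $(\mathcal{I}^*,\mathcal{A}^*)$ cross-block of $\nabla^2 f$) must be absorbed. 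Your weighted-norm / Young-inequality argument is a standard and legitimate way to do this, and in that sense your proof is more complete than the paper's. One caveat: the weighted-norm trick yields a contraction in $\|\cdot\|_\beta$, not in the Euclidean norm that \eqref{equation22} literally uses; this is harmless for the downstream application (Theorem~\ref{Theorem1} only needs contraction in some norm with a factor $\gamma<1$), but if you want \eqref{equation22} verbatim you should either restate the lemma in $\|\cdot\|_\beta$ or argue that on the invariant subspace $\{\bm{x}_{\mathcal{A}^*}=0\}$---to which all iterates in $\mathcal{B}(\hat{\rho})$ belong by Lemma~\ref{Lemma3}---the $(\mathcal{I}^*,\mathcal{A}^*)$ block is inactive, reducing the problem to the genuinely block-triangular $(\bm{x}_{\mathcal{I}^*},\bm{\epsilon}_{\mathcal{I}^*})$ system where your $\beta$-scaling handles the single remaining off-diagonal term.
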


\begin{proof} Given that $H_{\bm{\epsilon}}(\bm{\theta}) = \mu \epsilon $, it is easy to deduce that $\|\nabla H_{\bm{\epsilon}}(\bm{\theta})\| \le \mu < 1 $. 

    In the Lemma \ref{Lemma3}, we show that $ {\rm sign}(H_{\bm{x}}(\bm{\theta})) = {\rm sign}(\bm{x}) $ and $H(\bm{\theta})$ is continuously differentiable for all $\bm{\theta}\in \mathcal{B}(\hat{\rho})$. Therefore, it infers that for $i \in \mathcal{A}^*$, we have $[H_{\bm{x}}(\bm{x},\bm{\epsilon})]_i=0$. Then it holds that
    $$\|\nabla H_{\bm{x}}([\bm{x}_{\mathcal{A}^*};\bm{\epsilon}_{\mathcal{A}^*}])\| = 0 < 1.$$
For $i\in\mathcal{I}^*$, we can derive that 
$$\nabla H_{\bm{x}}([\bm{x}_{\mathcal{I}^*};\bm{\epsilon}_{\mathcal{I}^*}]) = 
\left[\begin{array}{c}
      I - \frac{1}{L}\nabla^2F([\bm{x}_{\mathcal{I}^*};\bm{\epsilon}_{\mathcal{I}^*}]) \\
      -\frac{1}{L}\nabla^2 \Phi([\bm{x}_{\mathcal{I}^*};\bm{\epsilon}_{\mathcal{I}^*}]) 
    \end{array}\right]. $$
 Drawing upon Assumption \ref{Assumption3} (i) and (ii), we have 

 $$\|\nabla ^2 F([\bm{x}_{\mathcal{I}^*};\bm{\epsilon}_{\mathcal{I}^*}])  \| \ge \kappa,   \:\: \|\nabla^2 \Phi([\bm{x}_{\mathcal{I}^*};\bm{\epsilon}_{\mathcal{I}^*}]  \| \le L_{\omega}.$$
With combination of $L> \max (\kappa, L_{\omega})$, we can deduces that

$$\|\nabla H_{\bm{x}}([\bm{x}_{\mathcal{I}^*};\bm{\epsilon}_{\mathcal{I}^*}])\| \le \max(1-\kappa/L,L_w/L)<1.$$ 
Therefore, $H(\bm{\theta})$ is a contraction mapping for all $\bm{\theta}\in$  $\mathcal{B}(\hat{\rho})$ and (\ref{equation22}) holds.

\end{proof}

In the following, we present a guarantee of local R-linear convergence for the AAIRL1 algorithm.

\begin{Theorem} \label{Theorem1} Let Assumption \ref{Assumption1}, \ref{Assumption2}, \ref{Assumption3} hold and $L> \max (\kappa, L_{\omega})$. Let $\bm{\theta}^*=H(\bm{\theta}^*)$ be a fixed-point of $H$, then if  $\bm{\theta}^0$ is sufficiently close to $\bm{\theta}^*$, the iterates $\{\bm{\theta}^k\}$ generated by AAIRL1 algorithm converge to $\bm{\theta}^*$ R-linearly with $\hat{\gamma}\in(\gamma,1)$.
    \begin{equation}
        \begin{aligned} \label{equation17}
            \|H^k-\bm{\theta}^k\| \leq \hat{\gamma}^k\|H^0-\bm{\theta}^0\|
    \end{aligned}
    \end{equation}
and
    \begin{equation}
        \begin{aligned} \label{equation18}
            \|\bm{\theta}^k-\bm{\theta}^*\| \leq \frac{1+\gamma}{1-\gamma} \hat{\gamma}^k\|\bm{\theta}^0-\bm{\theta}^*\|.
    \end{aligned}
    \end{equation}

\end{Theorem}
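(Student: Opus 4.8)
The plan is to reproduce, in the present locally-smooth setting, the classical R-linear convergence argument for Anderson acceleration of a contractive $C^1$ fixed-point map (the Toth--Kelley scheme of \cite[Theorem 2.3]{toth2015convergence}; see also \cite{bian2022anderson,mai2020anderson}). The three ingredients of that argument are now available on $\mathcal{B}(\hat{\rho})$: by Lemma \ref{Lemma3} the sign pattern of $H_{\bm{x}}$ is frozen and $H$ is continuously differentiable there; by Lemma \ref{Lemma4} $H$ is a $\gamma$-contraction there with $\gamma\in(0,1)$; and by Assumption \ref{Assumption3}(iv) the mixing coefficients obey $\sum_{i=0}^{m_k}|\alpha_i^k|\le M_{\bm{\alpha}}$. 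Throughout I write $\bm{r}^k:=H^k-\bm{\theta}^k$, whose $\bm{x}$-block is $\bm{r}_1^k=H_{\bm{x}}^k-\bm{x}^k$ and whose $\bm{\epsilon}$-block is $(\mu-1)\bm{\epsilon}^k$; since $\bm{\epsilon}^k=\mu^k\bm{\epsilon}^0\to 0$ geometrically with ratio $\mu\le\gamma$, the $\bm{\epsilon}$-part of every estimate below is automatically of order $\gamma^k$ and the substance is in the $\bm{x}$-part.

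The proof runs as a joint induction on $k$ showing simultaneously that $\bm{\theta}^k\in\mathcal{B}(\hat{\rho})$ and that \eqref{equation17} holds. For the bootstrap one takes $\|\bm{\theta}^0-\bm{\theta}^*\|$ small enough --- quantitatively in $\hat{\rho},\gamma,\hat{\gamma},m,M_{\bm{\alpha}}$ --- that the bound $\|\bm{\theta}^k-\bm{\theta}^*\|\le\frac{1}{1-\gamma}\|\bm{r}^k\|\le\frac{\hat{\gamma}^k}{1-\gamma}\|\bm{r}^0\|$ (a consequence of \eqref{equation17} together with the lower contraction estimate recalled in the last paragraph) never leaves $\mathcal{B}(\hat{\rho})$, so that Lemmas \ref{Lemma3}--\ref{Lemma4} stay in force at every iterate. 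The inductive step reduces to a single one-step estimate of the form $\|\bm{r}^{k+1}\|\le\gamma\,c_0\max_{k-m\le j\le k}\|\bm{r}^j\|+O(\gamma^{k+1})$ with a constant $c_0=c_0(m,M_{\bm{\alpha}})$ independent of $\gamma$; a standard sliding-window induction (precisely the mechanism of \cite[Theorem 2.3]{toth2015convergence}) then converts this "contraction over a window of length $m$" into the clean decay \eqref{equation17} for any preassigned $\hat{\gamma}\in(\gamma,1)$, once $\bm{\theta}^0$ is close enough that the finitely many early terms stay inside $\mathcal{B}(\hat{\rho})$ --- which simultaneously closes the bootstrap.

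To derive the one-step estimate I would proceed as follows. From the update $\bm{x}^{l+1}=\sum_i\alpha_i^l H_{\bm{x}}^{l-m_l+i}$, $\bm{\epsilon}^{l+1}=\mu\bm{\epsilon}^l$, and $\sum_i\alpha_i^l=1$, split the step difference as
\[
\bm{x}^{l+1}-\bm{x}^l=\sum_i\alpha_i^l\,\bm{r}_1^{l-m_l+i}+\sum_i\alpha_i^l\bigl(\bm{x}^{l-m_l+i}-\bm{x}^l\bigr),
\]
where the first sum is $\le\|\bm{r}_1^l\|\le\|\bm{r}^l\|$ by the least-squares optimality of $\bm{\alpha}^l$ (the unit vector selecting $\bm{r}_1^l$ being feasible), and the second is $\le M_{\bm{\alpha}}$ times a telescoping sum of consecutive steps over the window $\{l-m_l,\dots,l-1\}$; together with $\|\bm{\epsilon}^{l+1}-\bm{\epsilon}^l\|\le\|\bm{r}^l\|$ this controls $\|\bm{\theta}^{l+1}-\bm{\theta}^l\|$. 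Next, since $\sum_i\alpha_i^k=1$, the $\bm{x}$-block of the new residual is
\[
\bm{r}_1^{k+1}=H_{\bm{x}}(\bm{\theta}^{k+1})-\sum_i\alpha_i^k H_{\bm{x}}(\bm{\theta}^{k-m_k+i})=\sum_i\alpha_i^k\bigl(H_{\bm{x}}(\bm{\theta}^{k+1})-H_{\bm{x}}(\bm{\theta}^{k-m_k+i})\bigr),
\]
while its $\bm{\epsilon}$-block is $(\mu-1)\mu^{k+1}\bm{\epsilon}^0$; applying the $\gamma$-contraction of $H$ to each term, telescoping $\bm{\theta}^{k+1}-\bm{\theta}^{k-m_k+i}$ into consecutive steps, and inserting the bound of the previous display gives the asserted $\|\bm{r}^{k+1}\|\le\gamma\,c_0\max_{k-m\le j\le k}\|\bm{r}^j\|+O(\gamma^{k+1})$.

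Estimate \eqref{equation18} is then immediate from the contraction: using $\bm{\theta}^*=H(\bm{\theta}^*)$ one has $(1-\gamma)\|\bm{\theta}^k-\bm{\theta}^*\|\le\|H(\bm{\theta}^k)-\bm{\theta}^k\|=\|\bm{r}^k\|$, while $\|\bm{r}^0\|=\|H(\bm{\theta}^0)-\bm{\theta}^0\|\le(1+\gamma)\|\bm{\theta}^0-\bm{\theta}^*\|$; combining these with \eqref{equation17} yields $\|\bm{\theta}^k-\bm{\theta}^*\|\le\frac{1+\gamma}{1-\gamma}\hat{\gamma}^k\|\bm{\theta}^0-\bm{\theta}^*\|$. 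I expect the main obstacle to be the sliding-window induction of the third paragraph --- keeping straight which residuals and step-differences bound which, and verifying that the window constant $c_0$ can indeed be absorbed into the gap $\hat{\gamma}-\gamma$ once $\bm{\theta}^0$ is taken sufficiently close to $\bm{\theta}^*$. A secondary technical nuisance is that Anderson mixing is applied only to the $\bm{x}$-block while $\bm{\epsilon}$ is independently driven to zero; this is harmless since $\mu\le\gamma$, but the $\bm{\epsilon}$-terms still have to be tracked through every inequality.
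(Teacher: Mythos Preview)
Your overall strategy is right and matches the paper: reduce to the Toth--Kelley setting via Lemmas \ref{Lemma3}--\ref{Lemma4} and Assumption \ref{Assumption3}(iv), bootstrap the residual decay by induction, and recover \eqref{equation18} from \eqref{equation17} via $(1-\gamma)\|\bm{e}\|\le\|\bm{r}\|\le(1+\gamma)\|\bm{e}\|$. But the one-step estimate in your third paragraph does not close. Your bound $\|\bm{r}^{k+1}\|\le\gamma\,c_0\max_{j}\|\bm{r}^j\|$ carries a constant $c_0$ of order at least $M_{\bm{\alpha}}(m{+}1)$ that does \emph{not} shrink as $\bm{\theta}^0\to\bm{\theta}^*$; once $\gamma c_0\ge 1$ the inequality is vacuous and no rate follows. ``Absorbing $c_0$ into the gap $\hat{\gamma}-\gamma$'' would require $c_0\le\hat{\gamma}/\gamma$, and nothing in the hypotheses forces this. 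The auxiliary step-difference recursion you invoke, $\|\bm{x}^{l+1}-\bm{x}^l\|\le\|\bm{r}^l\|+M_{\bm{\alpha}}m\max_{j<l}\|\bm{x}^{j+1}-\bm{x}^j\|$, has the same defect: if $M_{\bm{\alpha}}m\ge 1$ it can blow up rather than stabilize.

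What the paper (and \cite[Theorem 2.3]{toth2015convergence}) actually does is \emph{linearize} $H$ at $\bm{\theta}^*$ rather than use the raw contraction: write $\bm{r}(\bm{\theta}^k)=\nabla\bm{r}(\bm{\theta}^*)\bm{e}^k+\Delta^k$, equivalently $H(\bm{\theta}^k)=\bm{\theta}^*+H'(\bm{\theta}^*)\bm{e}^k+\Delta^k$, with a \emph{second-order} remainder $\|\Delta^k\|\le\tfrac{L_{\bm{r}}}{2}\|\bm{e}^k\|^2$. Because $H'(\bm{\theta}^*)$ and $H'(\bm{\theta}^*)-I$ commute, summing with weights $\bm{\alpha}^K$ gives $\bm{r}^{K+1}=H'(\bm{\theta}^*)\sum_j\alpha_j^K\bm{r}^{K-m_K+j}+(\text{$\Delta$-terms})$; least-squares optimality then bounds the leading part by $\gamma\|\bm{r}^K\|$ with coefficient \emph{exactly} $\gamma$, not $\gamma c_0$. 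The $M_{\bm{\alpha}}$-dependence lives only in the $\Delta$-terms, which carry an extra factor $\|\bm{e}\|\le\rho/(1-\gamma)$ and therefore \emph{can} be made small by shrinking $\rho$. This Taylor remainder --- not the Lipschitz contraction --- is the missing ingredient that decouples the convergence rate from $M_{\bm{\alpha}}$.
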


\begin{proof}


Lemma \ref{Lemma3} implies that $H(\bm{x},\bm{\epsilon})$ is continuously differentiable for $\bm{\theta}\in\mathcal{B}(\rho)$. Then obviously $\bm{r}$ is also continuously differentiable. Denote $L_{\bm{r}}$ is the Lipschitz constant of $\nabla \bm{r} $ in $\mathcal{B}(\rho)$. There we need a special case of the result in \cite{kelly1995iterative} (Lemma 4.3.1). Denote $\bm{e} = \bm{\theta} - \bm{\theta}^*$. When $\rho \leq \hat{\rho}$ sufficiently small, for all $\bm{\theta} \in\mathcal{B}(\rho)$, we deduce

\begin{equation}\label{-equation5}
    \|\bm{r}(\bm{\theta}) - \nabla \bm{r}(\bm{\theta}^*)\bm{e}\| \leq \frac{L_{\bm{r}}}{2}\|\bm{e}\|^2
\end{equation}
and 

\begin{equation}\label{-equation6}
    \|\bm{e}\|(1-\gamma) \leq \|\bm{r}(\bm{\theta})\| \leq \|\bm{e}\|(1+\gamma).
\end{equation}
First we reduce $\rho$ then $\rho<2(1-\gamma)/L_{\bm{r}}$ and 

\begin{equation}\label{-equation9}
    \frac{\left(1 + \left(\frac{ M_{\bm{\alpha}}L_{\bm{r}}\rho}{2(1-\gamma)}\right)\gamma^{-m-1}\right)}{\left(1-\frac{L_{\bm{r}}\rho}{2(1-\gamma)}\right)}\leq 1.
\end{equation}
Reduce $\|\bm{e}^0\|$ make 
\begin{equation}\label{-equation10}
    \left(\frac{ M_{\bm{\alpha}}(\gamma+L_{\bm{r}}\rho/2)}{2(1-\gamma)}\right)\gamma^{-m}\|\bm{r}(\bm{x}^0)\| \leq \left(\frac{ M_{\bm{\alpha}}(1+\gamma)(\gamma+L_{\bm{r}}\rho/2)}{2(1-\gamma)}\right)\gamma^{-m}\|\bm{e}^0\| \leq \rho .
\end{equation}
Now we prove by induction, assume for all $0\leq k \leq K$
\begin{equation}\label{-equation11}
    \|\bm{r}(\bm{\theta}^K)\|\leq \gamma^k \|\bm{r}(\bm{\theta}^0)\|
\end{equation}
and

\begin{equation}\label{-equation12}
    \|\bm{e}^k\|\leq \rho.
\end{equation}
It is obviously when $K=0$. From equation (\ref{-equation5}), denote

\begin{equation}\label{-equation13}
    \bm{r}(\bm{\theta}^k) = \nabla \bm{r}(\bm{\theta}^0)\bm{e}^k + \Delta^k,
\end{equation}
where 

\begin{equation}\label{-equation14}
    \| \Delta^k \| \leq \frac{L_{\bm{r}}}{2}\|\bm{e}^{k}\|^2.
\end{equation}
Therefore, it holds that

\begin{equation}\label{-equation15}
    H(\bm{x}^k,\bm{\epsilon}^k) = \bm{\theta}^* + H'(\bm{\theta}^*)\bm{e}^k + \Delta^k.
\end{equation}
Weighted summing (\ref{-equation15}) with $\bm{\alpha}^k$ ($\sum \alpha^K_j = 1$), we have

\begin{equation}\label{-equation16}
    \begin{aligned}
        \left[\begin{array}{c}
            \bm{x}^{k+1}  \\
            \sum^{m_K}_{j=0}\alpha^K_j \bm{\epsilon}^{K-m_K+j}
            \end{array}\right]
=  \bm{\theta}^* + \sum^{m_K}_{j=0}\alpha^K_j(H'(\bm{x}^*,0)\bm{e}^{K-m_K+j}) + \sum^{m_K}_{j=0}\alpha^K_j\Delta^{K-m_K+j}.
    \end{aligned}
\end{equation}
Given that $\|\bm{\epsilon}^{K+1}\|\le \|\sum^{m_K}_{j=0}\alpha^K_j \bm{\epsilon}^{K-m_K+j}\|$, it implies that

\begin{equation}\label{-equation17}
        \|\bm{e}^{K+1}\| \le  \|\sum^{m_K}_{j=0}\alpha^K_j(H'(\bm{x}^*,0)\bm{e}^{K-m_K+j}) + \sum^{m_K}_{j=0}\alpha^K_j\Delta^{K-m_K+j}\|.
\end{equation}
Next, we analysis the boundness of $\sum^{m_K}_{j=0}\alpha^K_j\Delta^{K-m_K+j}$:

\begin{equation}\label{-equation18}
    \begin{aligned}
    \|\sum^{m_K}_{j=0}\alpha^K_j\Delta^{K-m_K+j} \| 
    &\leq  \sum^{m_K}_{j=0}|\alpha^k_j|\frac{L_{\bm{r}}}{2}\|\bm{e}^{K-m_K+j}\|^2  \\
    & \leq \sum^{m_K}_{j=0} |\alpha^K_j|\frac{L_{\bm{r}}\rho}{2(1-\gamma)}\|\bm{r}^{K-m_K+j}\| \\
    & \leq \sum^{m_K}_{j=0} |\alpha^K_j|\frac{L_{\bm{r}}\rho}{2(1-\gamma)} \gamma^{K-m_K+j} \|\bm{r}^{0}\| \\
    & \leq\frac{ M_{\bm{\alpha}}L_{\bm{r}}\rho}{2(1-\gamma)}\gamma^{-m} \|\bm{r}^{0}\| .
\end{aligned}
\end{equation}
The first inequality is by (\ref{-equation14}), the second inequality is by (\ref{-equation12}) and (\ref{-equation6}), the third inequality is by (\ref{-equation11}), the last inequality is by $\sum |\bm{\alpha}^{K}| \leq M_{\bm{\alpha}}$ and $K-m^{K}+j\geq -m$. Similarly, we can get 

\begin{equation}\label{-equation19}
\| \sum^{m_K}_{j=0}\alpha^K_j(H'(\bm{x}^*,0)\bm{e}^{K-m_K+j}) \| \leq \frac{M_{\bm{\alpha}}\gamma}{1-\gamma}\gamma^{-m}\|\bm{r}^{0}\|.
\end{equation}

(\ref{-equation17}),(\ref{-equation18}) and (\ref{-equation19}) imply that

\begin{equation}\label{-equation20}
\| \bm{e}^{K+1}\| \leq \frac{M_{\bm{\alpha}}(\gamma+L_{\bm{r}}\rho/2)}{1-\gamma}\gamma^{-m} \|\bm{r}^0\| \leq \rho.
\end{equation}
Following this, we can apply (\ref{-equation13}) with $k = K+1$

\begin{equation}\label{-equation21}
    \begin{aligned}
        \bm{r}(\bm{\theta}^{K+1}) &=\nabla \bm{r}(\bm{\theta}^*)\bm{e}^{K+1} + \Delta^{K+1} \\
        & = (H'(\bm{\theta}^*)-I)\bm{e}^{K+1} + \Delta^{K+1},
\end{aligned}
\end{equation}
and 

\begin{equation}\label{-equation22}
\| \Delta^{K+1} \| \leq \frac{L_{\bm{r}}}{2}\|\bm{e}^{K+1} \|^2.
\end{equation}
Subsequently, by (\ref{-equation6}), (\ref{-equation20}) and (\ref{-equation22}), we can obtain that

\begin{equation}\label{-equation23}
\| \Delta^{K+1} \| \leq \frac{L_{\bm{r}}\rho}{2(1-\gamma)}\|\bm{r}^{K+1} \|.
\end{equation}
Since $H'(\bm{x}^*,0)$ and $H'(\bm{x}^*,0)-I$ commute, then 

\begin{equation}\label{-equation24}
\begin{aligned}
    \|\bm{r}(\bm{\theta}^{K+1})\|= &\|(H_{\bm{x}}'(\bm{x}^*,0)-I)\bm{e}^{K+1} + \Delta^{K+1}\| \\
        \le & \|H_{\bm{x}}'(\bm{x}^*,0)\left(\sum^{m_K}_{j=0}\alpha^K_j((H_{\bm{x}}'(\bm{x}^*,0)-I)\bm{e}^{K-m_K+j})\right) \\
    &+ (H_{\bm{x}}'(\bm{x}^*,0)-I)\sum^{m_K}_{j=0}\alpha^K_j\Delta^{K-m_K+j} + \Delta^{K+1}\| \\
    = & \|H_{\bm{x}}'(\bm{x}^*,0)\left(\sum^{m_K}_{j=0}\alpha^K_j((H_{\bm{x}}'(\bm{x}^*,0)-I)\bm{e}^{K-m_K+j}+\sum^{m_K}_{j=0}\alpha^K_j\Delta^{K-m_K+j})\right) \\
    &-\sum^{m_K}_{j=0}\alpha^K_j\Delta^{K-m_K+j} + \Delta^{K+1} \|\\
        = &\| H_{\bm{x}}'(\bm{x}^*,0)\sum^{m_K}_{j=0}\alpha^K_j \bm{r}^{K-m_K+j}-\sum^{m_K}_{j=0}\alpha^K_j\Delta^{K-m_K+j} + \Delta^{K+1}\|, \\
\end{aligned}
\end{equation}
where first and last equality is by (\ref{-equation21}), second equality is by (\ref{-equation17}). Hence

\begin{equation}\label{-equation25}
\begin{aligned}
    \left(1-\frac{L_{\bm{r}}\rho}{2(1-\gamma)}\right)\|\bm{r}^{K+1}\|\leq  & \|\bm{r}^{K+1}\| - \|\Delta^{K+1}\| \\
        \leq & \gamma\|\sum^{m_K}_{j=0}\alpha^K_j \bm{r}^{K-m_K+j} \| + \|\sum^{m_K}_{j=0}\alpha^K_j\Delta^{K-m_K+j}\|  \\
        \leq & \gamma\|\bm{r}^{K}\| + \|\sum^{m_K}_{j=0}\alpha^K_j\Delta^{K-m_K+j}\| \\
        =& \left(1 + \left(\frac{ M_{\bm{\alpha}}L_{\bm{r}}\rho}{2(1-\gamma)}\right)\gamma^{-m-1}\right)\gamma^{K+1} \|\bm{r}^0\|,
\end{aligned}
\end{equation}
where the first inequality is by (\ref{-equation23}), second inequality is by (\ref{-equation24}), third inequality is because $\bm{\alpha}^k \leftarrow \mathop{argmin}_{\bm{\alpha}^T\textbf{1}=1}\|R^k\bm{\alpha}\|$ ($R^k$ is define in Algorithm 1) and the first equality is by (\ref{-equation18}). Therefore, from (\ref{-equation9}) we can deduce that

\begin{equation}\label{-equation26}
    \begin{aligned}
        \|\bm{r}(\bm{x}^{k+1})\| \leq   \frac{\left(1 + \left(\frac{ M_{\bm{\alpha}}L_{\bm{r}}\rho}{2(1-\gamma)}\right)\gamma^{-m-1}\right)}{\left(1-\frac{L_{\bm{r}}\rho}{2(1-\gamma)}\right)} \gamma^{K+1}\|\bm{r}(\bm{x}^0)\| \leq \gamma^{K+1}\|\bm{r}(\bm{x}^0)\|.
\end{aligned}
\end{equation}
Then (\ref{equation18}) hold.

\end{proof}

\section{ Globally convergent Anderson accelerated IRL1}

In the previous section, we established that under mild conditions, if the initial point is close to the fixed-point, the AAIRL1 algorithm has a remarkable convergence rate. However, its global convergence still remains largely uncertain. Even when dealing with strongly convex objective functions, Anderson accelerated gradient descent algorithm struggles to guarantee global convergence. Numerous studies have indicated that AA methods do not converge globally under more relaxed conditions \cite{scieur2016regularized,walker2011anderson}. Many existing works employ safe guarding strategies \cite{zhang2020globally,fu2020anderson,mai2020anderson} for AA methods to ensure global convergence.

Motivated by these studies, “a logical approach is to incorporate a safety condition to the AA step in each iteration. If the safety condition is satisfied, the AA step will be accepted to update the new iterate; otherwise, the unaccelerated step is accepted. By alternating between these two types of steps, global convergence is ensured. To increase the performance of our algorithm, we incorporated the idea of nonmonotone line search \cite{grippo1986nonmonotone} into our algorithm. Nonmonotone line search has been successfully applied in various optimization problems \cite{liuzzi2020algorithmic,mita2019nonmonotone,ferreira2023subgradient}. Consequently, we introduce a relaxed nonmonotone line search condition \cite{zhang2004nonmonotone}, it relaxes the requirement of a strictly decreasing function value, allowing for a higher acceptance rate of AA steps while ensuring algorithm convergence. The resulting globally convergent AAIRL1 algorithm is as follows:
 \begin{algorithm}[H]
    \renewcommand{\algorithmicrequire}{\textbf{Input:}}
    \renewcommand{\algorithmicensure}{\textbf{Initialize:}}
    \caption{Guard AAIRL1}\label{algo2}
        \begin{algorithmic}[1]
        \Require Given $ \bm{x}^0\in\mathbb{R}^n$ and $\bm{\epsilon}^0\in \mathbb{R}^n_{++}$. Pick $\beta,\mu \in (0,1)$, $\eta \in [0,1]$,   $m\ge 1$.
        \Ensure Set $k = 0 $, $E^0 =F(\bm{x}^0,\bm{\epsilon}^0) $ and $J^0 = 1$. 
        \While{not convergence}
        \State Set  $m_k = \min (m,k)$
        
        \State Set	$(H_{\bm{x}}^k,\bm{\epsilon}^{k+1}) \leftarrow H(\bm{x}^k,\bm{\epsilon}^k)$
        
        \State Set $R^k_1 = [\bm{r}^k_1,...,\bm{r}^{k-m_k}_1]$, where $\bm{r}^k_1 = H_{\bm{x}}^k - \bm{x}^k$

        \State Update $\bm{\alpha}^k \leftarrow \mathop{argmin}_{\bm{\alpha}^T\textbf{e}=1}\|R^k_1\bm{\alpha}\|$
        
        \State Set $\bm{x}^{k+1}_{AA} = \sum^{m_k}_{i=0}\alpha^k_i H_{\bm{x}}^{k-m_k+i}$

		\State Set $\chi(\bm{x}^k,\bm{\epsilon}^k) =  \mathop{{\rm max}}\limits_{i = 1,...,n}{\rm dist}(-\nabla_i f(\bm{x}^k), \omega(x_i^k,\epsilon_i^k)\partial|x_i^k| )$

        \If {$F(\bm{x}^{k+1}_{AA},\bm{\epsilon}^{k+1})\leq E^k - \beta\chi(\bm{x}^k,\bm{\epsilon}^k)$}

        \State Update $\bm{x}^{k+1} = \bm{x}^{k+1}_{AA}$
        \Else
        \State Update $\bm{x}^{k+1} = H_{\bm{x}}^k$
        \EndIf
        \State Update $J^{k+1} = \eta J^{k}+1$
        \State Update $E^{k+1} = (\eta J^{k}E^{k}+F(\bm{x}^{k+1},\bm{\epsilon}^{k+1}))/J^{k+1}$
        \State Set $k \leftarrow k+1$
        \EndWhile
        \end{algorithmic}
\end{algorithm}
In the algorithm \ref{algo2}, we define $E^{k}$ as a convex combination of $F(\bm{x}^{0},\bm{\epsilon}^{0}),F(\bm{x}^{1 },\bm{\epsilon}^{1}), ..., F(\bm{x}^{k},\bm{\epsilon}^{k})$, with more weights biased towards the newer $F$ and $E^{0}=F(\bm{x}^{0},\bm{\epsilon}^{0})$. Straightforwardly, $E^{k+1}$ is a convex combination of 
$E^{k}$ and $F(\bm{x}^{k+1},\bm{\epsilon}^{k+1})$  $\chi(\bm{x},\bm{\epsilon})$ serves as a measure for the ﬁrst-order necessary optimality of \eqref{equation1} at $(\bm{x},\bm{\epsilon})$. 

\begin{equation}
	\begin{aligned} \label{measure optimal}
		\chi(\bm{x},\bm{\epsilon}) :=  \mathop{{\rm max}}\limits_{i = 1,...,n}{\rm dist}(-\nabla_i f(x), \omega(x_i,\epsilon_i)\partial|x_i| ).
\end{aligned}
\end{equation}

\begin{equation}
	\begin{aligned} \label{nonmonotone descent condition}
		F(\bm{x}^{k+1}_{AA},\bm{\epsilon}^{k+1})\leq E^k - \beta\chi(\bm{x}^k,\bm{\epsilon}^k).
\end{aligned}
\end{equation}

In each iteration of Algorithm \ref{algo2}, we use the nonmonotone descent condition (\ref{nonmonotone descent condition}) as a criterion for accepting the AA step. If the condition (\ref{nonmonotone descent condition}) is satisfied, it implies sufficient descent has been achieved, and the AA step will be accepted. Conversely, when the condition (\ref{nonmonotone descent condition}) is not fulfilled, the unaccelerated step will be chosen.

 \subsection{Global convergence of Guard AAIRL1}

 This section provides a proof of the global convergence of the Algorithm \ref{algo2}. Initially, we can notice that \(\bm{x}^{k+1}\) is obtained through either the AA step or the unaccelerated step. Therefore, we can divide the iteration counts into two sets: \(K_{AA} := \{k_0, k_1, \ldots\}\), which denote the iterations where the AA step is accepted, and \(K_{UA} := \{l_0, l_1, \ldots\}\), which denote the unaccelerated steps. Obviously, $K_{AA}\bigcup K_{UA} = \mathbb{N} $

 \begin{Lemma}\label{Lemma5} Let Assumption \ref{Assumption1}, \ref{Assumption2} hold, it follows that the sequence $\{\bm{x}^k\}$ generated by Algorithm \ref{algo2} is bounded.
\end{Lemma}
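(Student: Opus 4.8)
The plan is to show that the sequence never leaves the level set $\mathcal{L}(F(\bm{x}^0,\bm{\epsilon}^0))$, which is bounded by Assumption~\ref{Assumption2}(i). The key objects are the auxiliary scalars $E^k$ and $J^k$ driving the nonmonotone test, together with the one-step descent property of the plain IRL1 step already established in the proof of Lemma~\ref{Lemma1}.

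First I would record the elementary identity that $E^{k+1}$ is a convex combination of $E^k$ and $F(\bm{x}^{k+1},\bm{\epsilon}^{k+1})$: since $J^{k+1}=\eta J^k+1$,
$$E^{k+1}=\frac{\eta J^k}{J^{k+1}}E^k+\frac{1}{J^{k+1}}F(\bm{x}^{k+1},\bm{\epsilon}^{k+1}),$$
and the two coefficients are nonnegative (as $\eta\ge 0$ and $J^k\ge 1$) and sum to one.

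The core step is an induction proving $F(\bm{x}^k,\bm{\epsilon}^k)\le E^k$ for every $k$. The base case is the equality $F(\bm{x}^0,\bm{\epsilon}^0)=E^0$. For the inductive step I split on how $\bm{x}^{k+1}$ is produced. If the AA step is accepted, condition~\eqref{nonmonotone descent condition} and $\chi(\bm{x}^k,\bm{\epsilon}^k)\ge 0$ (a maximum of distances to nonempty sets) give $F(\bm{x}^{k+1},\bm{\epsilon}^{k+1})\le E^k-\beta\chi(\bm{x}^k,\bm{\epsilon}^k)\le E^k$. If the unaccelerated step is taken, then $\bm{x}^{k+1}=H_{\bm{x}}^k$ is the exact minimizer of the IRL1 subproblem, so the first-order optimality condition \eqref{equation11} holds and inequality~\eqref{equation13} from the proof of Lemma~\ref{Lemma1} applies, yielding $F(\bm{x}^{k+1},\bm{\epsilon}^{k+1})\le F(\bm{x}^k,\bm{\epsilon}^k)\le E^k$ by the induction hypothesis (here one also uses $\bm{\epsilon}^{k+1}=\mu\bm{\epsilon}^k$, the same update rule under which \eqref{equation13} was derived). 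In either case $F(\bm{x}^{k+1},\bm{\epsilon}^{k+1})\le E^k$; plugging this into the convex-combination identity gives simultaneously $F(\bm{x}^{k+1},\bm{\epsilon}^{k+1})\le E^{k+1}$ (closing the induction) and $E^{k+1}\le E^k$, so $\{E^k\}$ is nonincreasing.

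To conclude, since $\epsilon_i^k=\mu^k\epsilon_i^0>0$ and $\phi$ is increasing (Assumption~\ref{Assumption1}(ii)), one has $F(\bm{x}^k)\le F(\bm{x}^k,\bm{\epsilon}^k)\le E^k\le E^0=F(\bm{x}^0,\bm{\epsilon}^0)$ for all $k$, hence $\bm{x}^k\in\mathcal{L}(F(\bm{x}^0,\bm{\epsilon}^0))$, which is bounded. I do not anticipate a serious obstacle; the only point requiring care is the simultaneous handling of the two branches in the induction — in particular, noticing that the Lemma~\ref{Lemma1} descent estimate~\eqref{equation13} is available for the unaccelerated branch precisely because that step is the exact subproblem minimizer, while for the AA branch one must instead rely on the acceptance test \eqref{nonmonotone descent condition} rather than any intrinsic descent of the accelerated iterate.
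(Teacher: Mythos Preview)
Your proposal is correct and follows essentially the same route as the paper: an induction showing $F(\bm{x}^k,\bm{\epsilon}^k)\le E^k$ by splitting on the AA/unaccelerated branches (using \eqref{nonmonotone descent condition} for the former and \eqref{equation13} for the latter), then the convex-combination identity for $E^{k+1}$ to get $E^{k+1}\le E^k$, and finally containment in the bounded level set $\mathcal{L}(F(\bm{x}^0,\bm{\epsilon}^0))$. If anything, you spell out a couple of points (nonnegativity of $\chi$, the monotonicity of $\phi$ giving $F(\bm{x}^k)\le F(\bm{x}^k,\bm{\epsilon}^k)$) more explicitly than the paper does.
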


\begin{proof}

    Initially, we prove that $F(\bm{x}^{k},\bm{\epsilon}^{k})\leq E^k$ and $E^{k+1}\leq E^{k}$ for all $k\in \mathbb{N}$, we prove by induction. Assume that $F(\bm{x}^{k},\bm{\epsilon}^{k})\leq E^k$, it is obviously holds when $k=0$. Let's assume the inequality holds for some $k$, i.e., $F(\bm{x}^{k},\bm{\epsilon}^{k})\leq E^k$. Consequently we  proceed to discuss different condition. 

    For $k_i\in K_{AA}$, we have $F(\bm{x}^{k_i+1},\bm{\epsilon}^{k_i+1})\leq E^{k_i}$, as indicated by (\ref{nonmonotone descent condition}). Moreover, since $E^{{k_i}+1}$ is a convex combination of $E^{{k_i}}$ and $F(\bm{x}^{{k_i}+1},\bm{\epsilon}^{{k_i}+1})$, we can infer that $F(\bm{x}^{{k_i}+1},\bm{\epsilon}^{{k_i}+1})\leq E^{{k_i}+1} \leq E^{{k_i}}$.

    For $l_i\in K_{UA}$, similar to (\ref{equation13}), it follows that
    \begin{equation}
        \begin{aligned} 
             F(\bm{x}^{l_i+1},\bm{\epsilon}^{l_i+1})  
             &\leq F(\bm{x}^{l_i},\bm{\epsilon}^{l_i}) - \frac{M-L_f}{2}\|\bm{x}^{l_i}-\bm{x}^{l_i+1}\|^2 \\
             &\leq E^{l_i} - \frac{M-L_f}{2}\|\bm{x}^{l_i}-\bm{x}^{l_i+1}\|.
    \end{aligned}
    \end{equation}
    Similarly, we can deduce that  $F(\bm{x}^{l_i+1},\bm{\epsilon}^{l_i})\leq E^{l_i+1} \leq E^{l_i}$. Therefore, it holds that $F(\bm{x}^{k+1},\bm{\epsilon}^{k+1})\leq E^{k+1}$. $E^{k+1}\leq E^{k}$ and $E^k$ is monotonically non-increasing. Hence we have $-\infty \leq F(\bm{x}^{k},\bm{\epsilon}^{k}) \leq E^k\leq E^0 = F(\bm{x}^0,\bm{\epsilon}^0)$ for all $k\in\mathbb{N} $ and  $\{\bm{x}^k\}\subset \mathcal{L}(F(\bm{x}^0,\bm{\epsilon}^0))$. The sequence $\{\bm{x}^k\}$ generated by Algorithm \ref{algo2} is bounded.
\end{proof}

 \begin{Theorem} \label{Theorem2} Let Assumption \ref{Assumption1},\ref{Assumption2} hold and $\bm{x}^*$ is an accumulation point of $\{\bm{x}^{k}\}$ that generated by Algorithm \ref{algo2}, it follows that  $\bm{x}^*$ is first-order necessary optimal for \eqref{equation1}

\begin{proof}
For $k_i\in K_{AA}$, it follows that 
\begin{equation}
    \begin{aligned}\label{AA descent}
            E^{k_i+1}  
            &= \frac{\eta J^{k_i}E^{k_i}+F(\bm{x}^{k_i+1},\bm{\epsilon}^{k_i+1})}{J^{k_i+1}} \\
            &\leq \frac{\eta J^{k_i}E^{k_i}+E^k_i - \beta\chi(\bm{x}^{k_i},\bm{\epsilon}^{k_i})}{J^{k_i+1}} = E^{k_i} - \frac{\beta\chi(\bm{x}^{k_i},\bm{\epsilon}^{k_i})}{J^{k_i+1}} .
\end{aligned}
\end{equation}
For $l_i\in K_{UA}$, it follows that 

\begin{equation}
    \begin{aligned}\label{UA descent}
            E^{l_i+1}  
            &= \frac{\eta J^{l_i}E^{l_i}+F(\bm{x}^{l_i+1},\bm{\epsilon}^{l_i+1})}{J^{l_i+1}} \\
            &\leq \frac{\eta J^{l_i}E^{l_i}+E^{l_i} - (M-L_f)\|\bm{x}^{l_i}-\bm{x}^{l_i+1}\|^2/2} {J^{l_i+1}} \\
            &= E^{l_i} - \frac{(M-L_f)\|\bm{x}^{l_i}-\bm{x}^{l_i+1}\|^2} {2J^{l_i+1}}.
\end{aligned}
\end{equation}
Summing up (\ref{AA descent}) and (\ref{UA descent}) for all $k_i\in K_{AA}$ and $l_i\in K_{UA}$, we can deduce that

\begin{equation}
    \begin{aligned}\label{bounded}
        \infty > E^0 - \displaystyle\lim_{k\rightarrow \infty}E^{k+1}  \geq \sum_{k_i \in K_{AA}}\frac{\beta\chi(\bm{x}^{k_i},\bm{\epsilon}^{k_i})}{J^{k_i+1}}  +  \sum_{l_i \in K_{UA}}\frac{(M-L_f)\|\bm{x}^{l_i}-\bm{x}^{l_i+1}\|^2} {2J^{l_i+1}}.
\end{aligned}
\end{equation}


Due to the boundness of $\mathcal{L}(F(\bm{x}^0))$, we have the sequence $\{\bm{x}^k\}$ is bounded. Let $\bm{x}^*$ be a cluster point of $\{\bm{x}^k\}$ with subsequence $\{\bm{x}^k\}_S$. Subsequently, when $k\rightarrow \infty$, one of the following statements holds:

\begin{equation}
    \begin{aligned}\label{Accelerated infty}
        \lim_{\substack{k\rightarrow \infty \\ k\in S}    } \chi(\bm{x}^{k},\bm{\epsilon}^{k})= \lim_{\substack{k\rightarrow \infty \\ k\in S}} {\rm dist}(-\nabla f(\bm{x}^{k}), W^{k}\partial|\bm{x}^{k}| ) = 0.
\end{aligned}
\end{equation}

\begin{equation}
    \begin{aligned}\label{Unaccelerated infty}
        \lim_{\substack{k\rightarrow \infty \\ k\in S}} \nabla f(\bm{x}^{k})+L(\bm{x}^{k+1}-\bm{x}^{k})+ \lambda W^k \xi  = 0,
\end{aligned}
\end{equation}
where $\xi^k \in \partial \|\bm{x}^{k+1}\|_1$. \eqref{Accelerated infty} infers that $\bm{x}^*$ is first-order necessary optimal of \eqref{equation1} directly. Analogous to the proof in Lemma \ref{Lemma1} (ii), \eqref{Unaccelerated infty} can infer $\bm{x}^* = H_{\bm{x}}(\bm{x}^*,0)$, which further substantiates that $\bm{x}^*$ is first-order necessary optimal.
\end{proof}

\end{Theorem}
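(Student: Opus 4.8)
The plan is to establish the global convergence of Algorithm~\ref{algo2} by first extracting summable-type information from the nonmonotone line search mechanism, and then performing a subsequential argument along an accumulation point. The key preliminary observation, which I would carry over from Lemma~\ref{Lemma5}, is that $E^k$ is non-increasing and bounded below (since $F$ is bounded below on the level set $\mathcal{L}(F(\bm{x}^0,\bm{\epsilon}^0))$, which is bounded by Assumption~\ref{Assumption2}(i)). Hence $\{E^k\}$ converges, and $E^0 - \lim_{k\to\infty}E^{k+1}$ is finite. First I would split the iteration index set into $K_{AA}$ and $K_{UA}$ and telescope the per-step decrease of $E^k$: on $K_{AA}$ the nonmonotone condition \eqref{nonmonotone descent condition} combined with the update $E^{k+1}=(\eta J^kE^k+F(\bm{x}^{k+1},\bm{\epsilon}^{k+1}))/J^{k+1}$ gives $E^{k_i+1}\le E^{k_i} - \beta\chi(\bm{x}^{k_i},\bm{\epsilon}^{k_i})/J^{k_i+1}$; on $K_{UA}$ the sufficient-decrease estimate \eqref{equation13} gives $E^{l_i+1}\le E^{l_i} - (M-L_f)\|\bm{x}^{l_i}-\bm{x}^{l_i+1}\|^2/(2J^{l_i+1})$. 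Summing these over all $k$ yields the finiteness of the combined series in \eqref{bounded}.

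Next, I would fix an accumulation point $\bm{x}^*$ with a convergent subsequence $\{\bm{x}^k\}_S$. The main obstacle I anticipate is controlling the denominators $J^{k+1}$: since $J^{k+1}=\eta J^k+1$, if $\eta<1$ then $J^k\to 1/(1-\eta)$ is bounded, so the weights $1/J^{k+1}$ stay bounded away from zero and the summability of \eqref{bounded} forces the corresponding terms to vanish along the whole sequence; if $\eta=1$ then $J^k=k+1\to\infty$ and the argument needs more care --- one must argue that infinitely many indices of $S$ lie in at least one of $K_{AA}$ or $K_{UA}$, and along that sub-subsequence extract that either $\chi(\bm{x}^{k_i},\bm{\epsilon}^{k_i})\to 0$ or $\|\bm{x}^{l_i}-\bm{x}^{l_i+1}\|\to 0$, where in the latter case one may also need that the index gaps do not grow too fast (a standard feature of the Zhang--Hager line search). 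In either regime the conclusion is that along a suitable sub-subsequence $S'\subseteq S$, one of the two limits in \eqref{Accelerated infty} or \eqref{Unaccelerated infty} holds.

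Finally I would convert each of these two limiting statements into the first-order necessary optimality of $\bm{x}^*$ for \eqref{equation1}. In the first case, $\chi(\bm{x}^k,\bm{\epsilon}^k)\to 0$ means ${\rm dist}(-\nabla f(\bm{x}^k), W^k\partial\|\bm{x}^k\|_1)\to 0$; using continuity of $\nabla f$, the boundedness of $\{\bm{x}^k\}$, the decay $\bm{\epsilon}^k\to 0$, the continuity of $\omega(\cdot,\cdot)$ on the relevant set, and the outer semicontinuity of the subdifferential $\partial|\cdot|$, I would pass to the limit along $S'$ to obtain $\bm{0}\in\nabla f(\bm{x}^*)+\lambda\sum_i\phi'(|x_i^*|)\partial|x_i^*|$, i.e. the optimality condition. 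In the second case, $\|\bm{x}^{l_i}-\bm{x}^{l_i+1}\|\to 0$ together with \eqref{equation11} evaluated at $l_i$ gives $\nabla f(\bm{x}^{l_i})+\lambda W^{l_i}\xi^{l_i}\to\bm{0}$ with $\xi^{l_i}\in\partial\|\bm{x}^{l_i+1}\|_1$; since $\bm{x}^{l_i+1}\to\bm{x}^*$ as well, this is exactly the situation already handled in the proof of Lemma~\ref{Lemma1}(ii), so invoking that argument (which uses the special case of Lemma~7 of \cite{wang2021nonconvex}) yields $\bm{x}^* = H_{\bm{x}}(\bm{x}^*,0)$ and hence first-order necessary optimality. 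I expect the genuinely delicate part to be the bookkeeping in the $\eta=1$ case --- ensuring that the vanishing of the weighted series actually forces the unweighted quantities $\chi$ or $\|\bm{x}^{l_i}-\bm{x}^{l_i+1}\|$ to zero along a subsequence of $S$, rather than merely along some unrelated subsequence of $\mathbb{N}$.
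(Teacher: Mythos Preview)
Your proposal is correct and follows essentially the same route as the paper: split $\mathbb{N}$ into $K_{AA}$ and $K_{UA}$, obtain per-step decreases of $E^k$ via \eqref{nonmonotone descent condition} and \eqref{equation13}, telescope to get the summable series \eqref{bounded}, and then argue that along a subsequence either $\chi\to 0$ or $\|\bm{x}^{l_i}-\bm{x}^{l_i+1}\|\to 0$, finishing via Lemma~\ref{Lemma1}(ii) in the latter case. In fact you are more careful than the paper on one point: the paper passes directly from \eqref{bounded} to the dichotomy \eqref{Accelerated infty}/\eqref{Unaccelerated infty} without discussing the denominators $J^{k+1}$, whereas you correctly observe that for $\eta<1$ the $J^k$ are bounded (so summability forces the numerators to zero), while the boundary case $\eta=1$ needs the extra bookkeeping you outline.
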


\section{Numerical results}

In this section, we assess the performance of different acceleration algorithms through numerical experiments. All code implementations are in MATLAB, and the experiments run on a 64-bit laptop equipped with MATLAB 2022a, an Intel Core™ i7-1165G7 processor (2.80GHz), and 32GB of RAM.

We test the following sparse recovery problem:

$$\min\limits _{\bm{x}\in \mathbb{R}^n} \:F(\bm{x}):=  \frac{1}{2}\|A\bm{x}-\bm{b}\|^2 +  \lambda \displaystyle\sum^n_{i=1} |x_i|^p,$$
where $A\in \mathbb{R}^{m\times n}, \bm{b}\in \mathbb{R}^{m}$, the target of the question is to recovery a sparse signal $\bm{x}$ from $m$ observations through the partial measurements with $m\ll n$. This is a common sparse optimization problem (similar to \cite{wen2018proximal,wen2018proximal,wang2022extrapolated}). We generate $m\times n$ matrix $A$ with i.i.d. standard Gaussian entries and orthonormalizing the rows. We generate the true signal $\bm{x}_{\mathop{{\rm true}}}$ by randomly selecting K elements from an n-dimensional all-zero vector and setting them to ±1. We form the observation vector $\bm{y} = A\bm{x}_{\mathop{{\rm true}}} + \bm{\epsilon}$, where $\bm{\epsilon}\in \mathbb{R}^n.$ follows a Gaussian distribution with mean 0 and variance $10^{-4}$. 

In the following experiments, we compare the performance of the Guard-AAIRL1, IRL1, Iteratively Reweighted $\ell_2$ algorithm (IRL2), and the state-of-the-art NesIRL1 algorithm (based on Nesterov acceleration) proposed in \cite{wang2022extrapolated}. We fix the regularization coefficient as $\lambda = 0.1$, $p=0.5$, $\bm{\epsilon}^0=\bm{1}$ and $\mu = 0.9$. We initialize all algorithms with $\bm{x}_0$ sampled from a standard Gaussian distribution. Furthermore, we apply a uniform termination criterion to all algorithms:$\mathop{{\rm opttol}} \le 10^{-14}$.

\begin{equation}
    \begin{aligned}
    \mathop{{\rm opttol}} = \frac{\|\bm{x}^k-\bm{x}^{k-1}\|}{\|\bm{x}^k\|},
\end{aligned}
\end{equation} 

\begin{table}
    \renewcommand\arraystretch{1.5}
    \caption{The performance of IRL1,IRL2,AAIRL1 and NesIRL1}\label{table2}
    \begin{tabular*}{\textwidth}{@{\extracolsep\fill}llllll}
    \toprule
    $(m,n,K)$         &  & cpu time &         &         &         \\ \cmidrule{3-6}
                    &  & IRL1     & IRL2    & AAIRL1  & NesIRL1 \\ \hline
    (400 , 800  , 80 )     &  & 2.0375   & 2.5625  & 1.3609  & 1.7094  \\ 
    (800 , 1600 , 160)     &  & 10.7656   & 12.8281 & 5.7422  & 8.0664  \\ 
    (1200, 2400 , 240)     &  & 28.2656  & 35.8125 & 17.4219 & 22.2851 \\ 
    (1600, 3200 , 320)     &  & 70.9688  & 78.6406 & 53.
    0859 & 63.2812 \\ \bottomrule

    (m,n,K)         &  & Sparsity &         &         &         \\ \cmidrule{3-6}
    &  & IRL1     & IRL2    & AAIRL1  & NesIRL1 \\ \hline
(400 , 800  , 80 )     &  & 68.4   & 386.7  & 67.1  & 61.2  \\ 
(800 , 1600 , 160)     &  & 148.5   & 713.6 & 143.9  & 132.1  \\ 
(1200, 2400 , 240)     &  & 207.1  & 1092.4 & 200.8 & 188.4 \\ 
(1600, 3200 , 320)     &  & 264.9  & 1422.7 & 255.4 & 241.2 \\ \bottomrule
    \end{tabular*}
    \end{table}

We use different scales of $(m,n,K)$ to verify the efficiency of these different algorithms. We measure the average CPU time across 50 random data sets $(A,\bm{x}_{\mathop{{\rm true}}},\bm{y})$. In the Guard-AAIRL1 algorithm, we employ commonly used settings $m=15,\eta = 0.85$ and $\beta = 10^{-11}$. Regarding NesIRL1, we choose $\alpha^k = (k-1)/ (k+2)$ and $\alpha^0 = 0$. Table \ref{table2} demonstrates that Guard-AAIRL1 exhibits the fastest convergence to the other algorithms. Additionally, the Fig. \ref{figure1} illustrates that the Guard-AAIRL1 algorithm requires significantly fewer iterations than the other algorithms.This is attributed to the larger computational cost per single step, possibly due to the subproblem [\ref{AA subproblem}] within the algorithm and the evaluation of nonmonotonic descent conditions [\ref{measure optimal}].

\begin{figure}
        \centering
        \centering
        \includegraphics[width=0.45\linewidth]{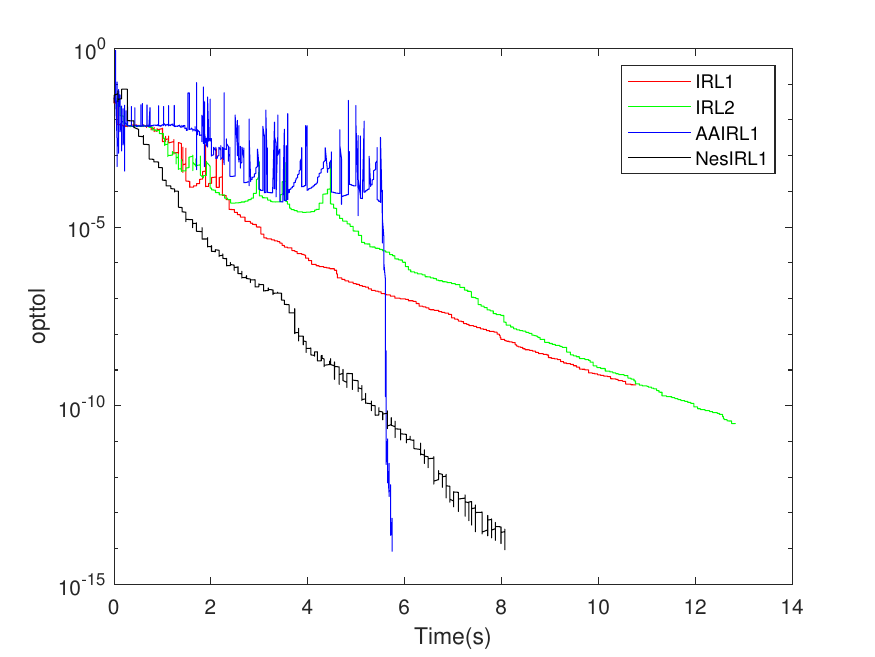}
        \centering
        \includegraphics[width=0.45\linewidth]{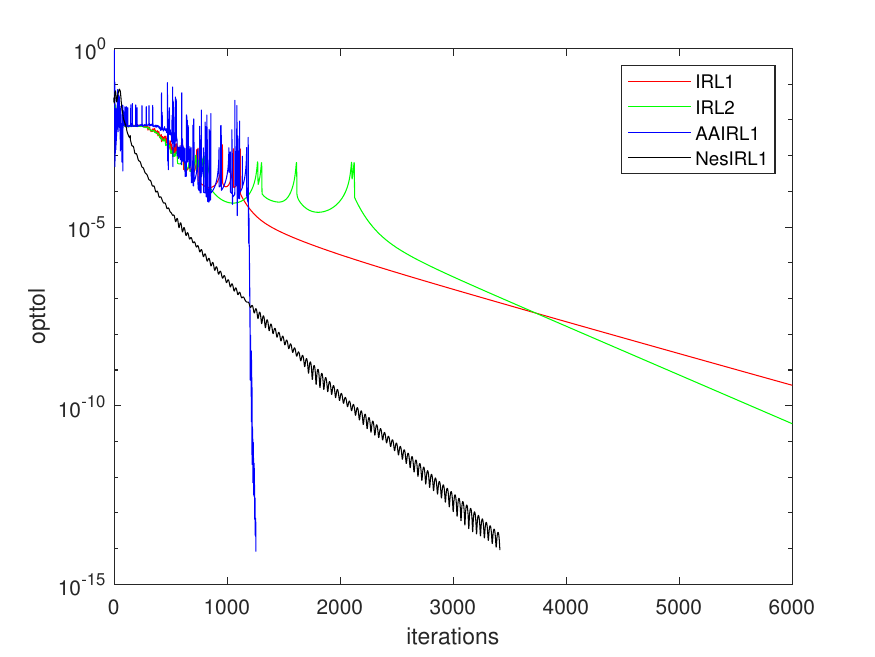}
        \caption{Opttol curves over the time and iterations when (m,n,K) = (800,1600,160)}\label{figure1}
\end{figure}


We now delve into the selection of various hyperparameters for the Guard-AAIRL1 algorithm. Fig. \ref{figure2} (a) illustrates that the choice of $m$ presents a trade-off . When $m$ is too small, Guard-AAIRL1 may not effectively leverage historical iteration information. Conversely, an excessively large $m$ results in an increased computational load per iteration. Empirical evidence suggests that reasonable choices for $m$ include $m=15$ or $m=5$. Furthermore, selecting appropriate values for $\eta$ and $\beta$ is crucial in controlling the degree of nonmonotonicity. In fact, the originator of this nonmonotonic line search condition recommended  $\eta=0.85$. Fig. \ref{figure2} (c) demonstrates a notable negative correlation between the value of $\beta$ and algorithm performance. At the tail of the algorithm, a larger value of $\beta$ will make the nonmonotonic descent condition is hard to be satisfied. Therefore, the value of $\beta$ show correspond to the torlerence of the problem.

\begin{figure}
    
    \subfigure[Performance for $m = 5,15,30$]{
    
        \includegraphics[width=0.45\linewidth]{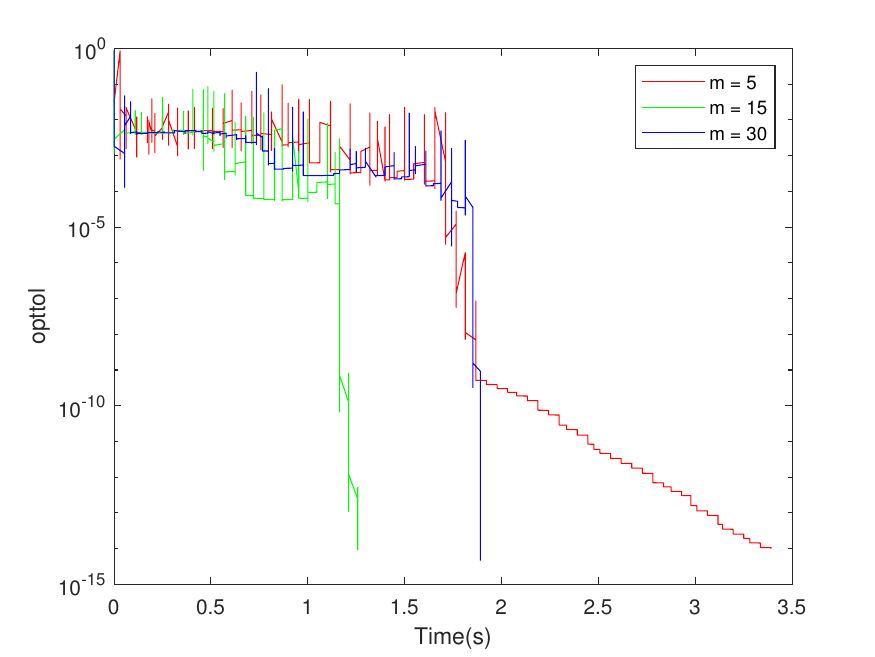}
        \label{label_for_cross_ref_1}

	  \includegraphics[width=0.45\linewidth]{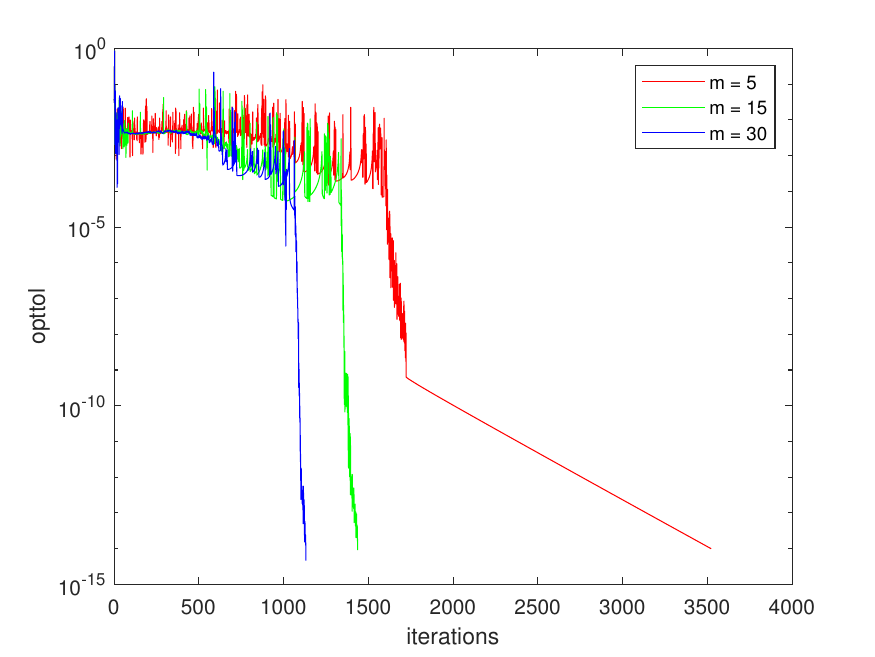}

    }
    
    \quad    
    
    \subfigure[Performance for $\eta = 0.2,0.35,0.5,0.7,0.85$]{
    	\includegraphics[width=0.45\linewidth]{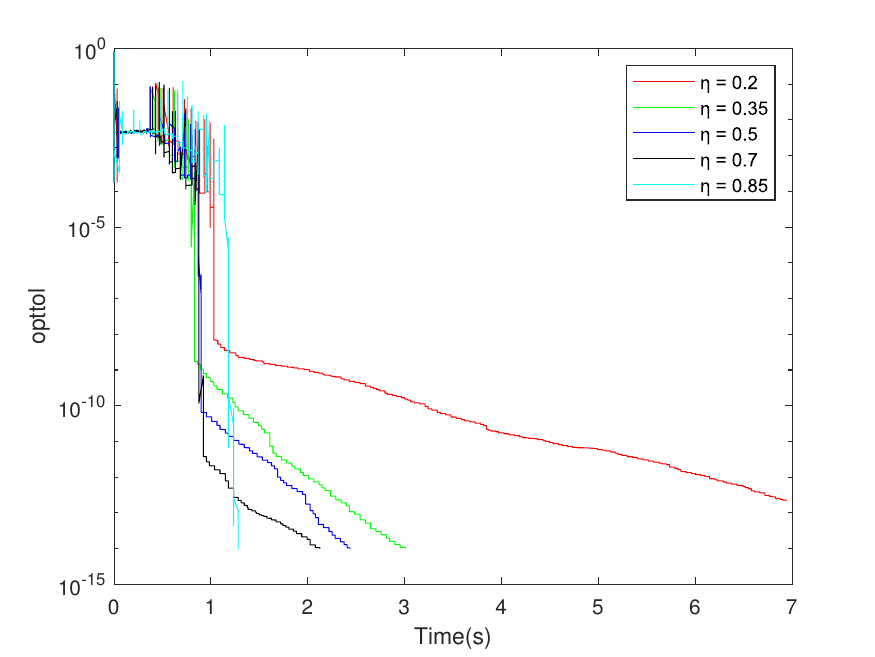}
        \includegraphics[width=0.45\linewidth]{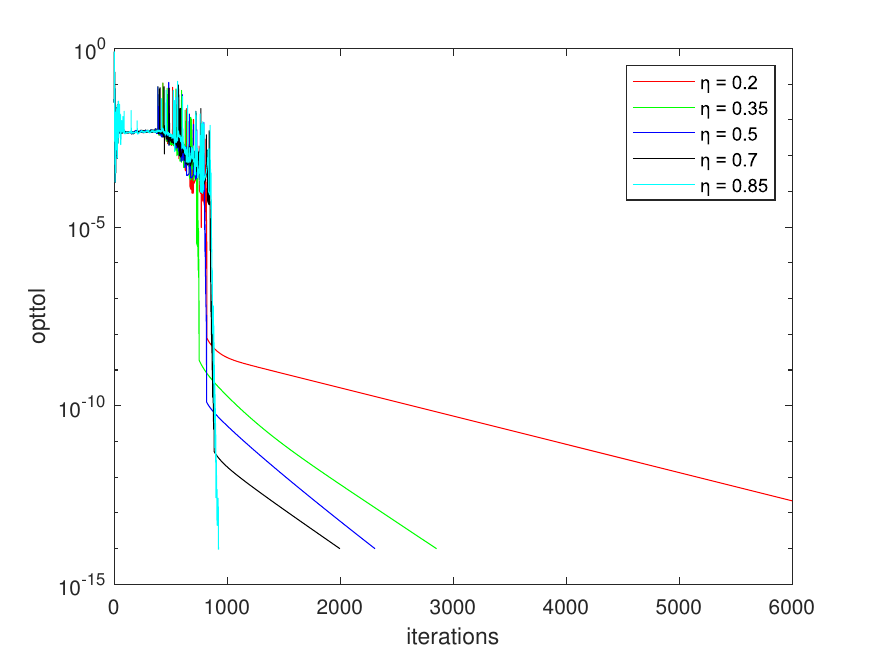}
        \label{label_for_cross_ref_4}
    }

    \quad    
    
    \subfigure[Performance for $\beta = 10^{-5},10^{-7},10^{-9},10^{-11},10^{-13}$]{
    	\includegraphics[width=0.45\linewidth]{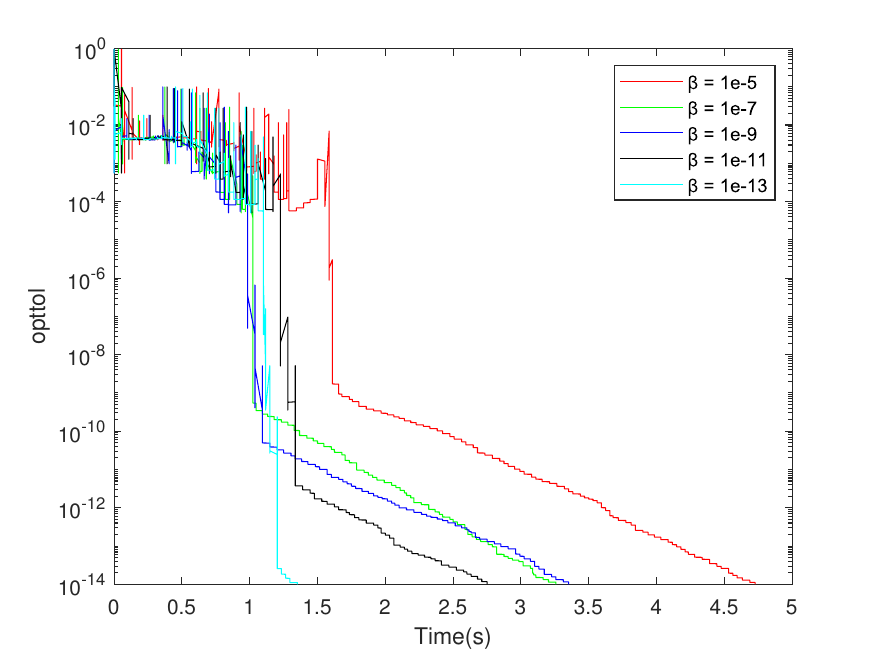}
     
	  \includegraphics[width=0.45\linewidth]{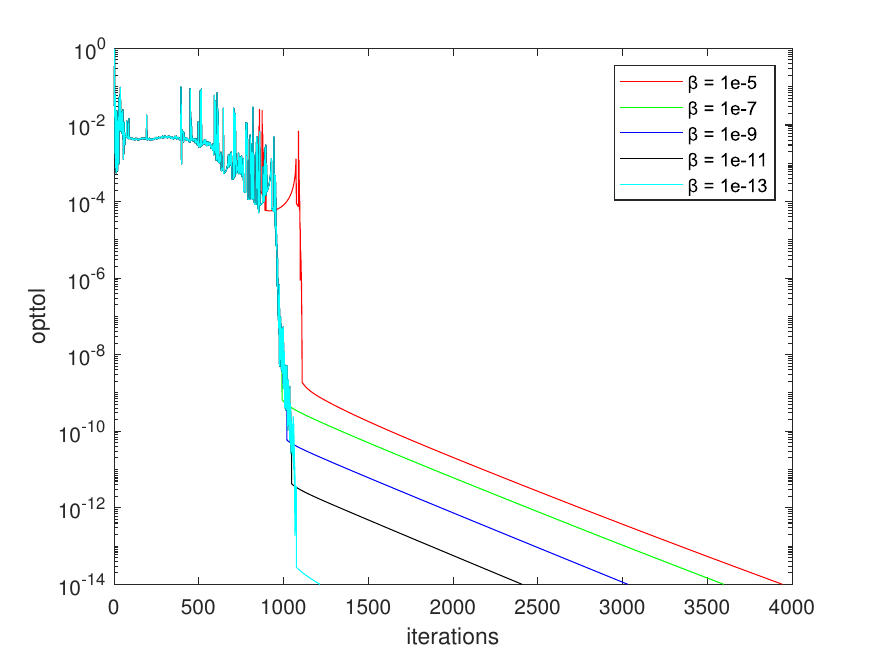}        
        \label{label_for_cross_ref_2}
    }
        \caption{The performance for different hyperparameter when (m,n,K) = (400,800,80)}.   \label{figure2}
\end{figure}

\newpage





\bibliography{sn_article}
\end{document}